\newcommand{\menge}[2]{\big\{{#1} \mid {#2}\big\}}
\newcommand{\emp}{\ensuremath{{\varnothing}}}
\newcommand{\infconv}{\ensuremath{\mbox{\small$\,\square\,$}}}
\newcommand{\scal}[2]{\left\langle{#1}\mid {#2} \right\rangle} 
\newcommand{\pscal}[2]{\langle\langle{#1}\mid{#2}\rangle\rangle} 
\newcommand{\vuo}{\ensuremath{\mbox{\footnotesize$\square$}}}
\newcommand{\HH}{\ensuremath{\mathcal H}}
\newcommand{\GG}{\ensuremath{\mathcal G}}
\newcommand{\BL}{\ensuremath{\EuScript B}\,}
\newcommand{\BP}{\ensuremath{\EuScript P}}
\newcommand{\KKK}{\ensuremath{\boldsymbol{\mathcal K}}}
\newcommand{\RR}{\ensuremath{\mathbb R}}
\newcommand{\RP}{\ensuremath{\left[0,+\infty\right[}}
\newcommand{\RPP}{\ensuremath{\,\left]0,+\infty\right[}}
\newcommand{\NN}{\ensuremath{\mathbb N}}
\newcommand{\dom}{\ensuremath{\operatorname{dom}}}
\newcommand{\prox}{\ensuremath{\operatorname{prox}}}
\newcommand{\ran}{\ensuremath{\operatorname{ran}}}
\newcommand{\zer}{\ensuremath{\operatorname{zer}}}
\newcommand{\gra}{\ensuremath{\operatorname{gra}}}
\newcommand{\vv}{\ensuremath{\boldsymbol{v}}}
\newcommand{\xx}{\ensuremath{\boldsymbol{x}}}
\newcommand{\xxx}{\ensuremath{\overline{\boldsymbol{x}}}}
\newcommand{\pp}{\ensuremath{\boldsymbol{p}}}
\newcommand{\qq}{\ensuremath{\boldsymbol{q}}}
\newcommand{\yy}{\ensuremath{\boldsymbol{y}}}
\newcommand{\ee}{\ensuremath{\boldsymbol{e}}}
\newcommand{\bb}{\ensuremath{\boldsymbol{b}}}
\newcommand{\uu}{\ensuremath{\boldsymbol{u}}}
\newcommand{\cc}{\ensuremath{\boldsymbol{c}}}
\newcommand{\dd}{\ensuremath{\boldsymbol{d}}}
\newcommand{\aaa}{\ensuremath{\boldsymbol{a}}}
\newcommand{\ww}{\ensuremath{\boldsymbol{w}}}
\newcommand{\BB}{\ensuremath{\boldsymbol{B}}}
\newcommand{\UU}{\ensuremath{\boldsymbol{U}}}
\newcommand{\E}{\ensuremath{\mathbf{E}}}
\newcommand{\AAA}{\ensuremath{\boldsymbol{A}}}
\newcommand{\BBB}{\ensuremath{\boldsymbol{B}}}
\newcommand{\FF}{\ensuremath{\boldsymbol{\mathcal{F}}}}
\newcommand{\Id}{\ensuremath{\operatorname{Id}}}
\newcommand{\weakly}{\ensuremath{\rightharpoonup}}
\newcommand{\pinf}{\ensuremath{+\infty}}
\newtheorem{theorem}{Theorem}[section]
\newtheorem{lemma}[theorem]{Lemma}
\newtheorem{corollary}[theorem]{Corollary}
\newtheorem{proposition}[theorem]{Proposition}
\theoremstyle{plain}{\theorembodyfont{\rmfamily}
}
\theoremstyle{plain}{\theorembodyfont{\rmfamily}
}
\theoremstyle{plain}{\theorembodyfont{\rmfamily}
}
\theoremstyle{plain}{\theorembodyfont{\rmfamily}
\newtheorem{example}[theorem]{Example}}
\theoremstyle{plain}{\theorembodyfont{\rmfamily}
\newtheorem{problem}[theorem]{Problem}}
\theoremstyle{plain}{\theorembodyfont{\rmfamily}
\newtheorem{remark}[theorem]{Remark}}
\theoremstyle{plain}{\theorembodyfont{\rmfamily}
}
\definecolor{labelkey}{rgb}{0,0.08,0.45}
\definecolor{refkey}{rgb}{0,0.6,0.0}
\definecolor{Brown}{rgb}{0.45,0.0,0.05}
\definecolor{dgreen}{rgb}{0.00,0.49,0.00}
\definecolor{dblue}{rgb}{0,0.08,0.75}
\numberwithin{equation}{section}
\begin{document}
\title{\sffamily\huge 
Almost sure convergence of the forward-backward-forward splitting algorithm }
\author{
B$\grave{\text{\u{a}}}$ng C\^ong V\~u\\[5mm]
 LCSL, Istituto Italiano di Tecnologia\\
       and Massachusetts Institute of Technology,\\
       Bldg. 46-5155, 77 Massachusetts Avenue, Cambridge, MA 02139, USA\\
({email: Cong.Bang@iit.it})} 
\date{}
\maketitle
\begin{abstract}
In this paper, we propose a stochastic forward-backward-forward splitting algorithm and 
prove its almost sure  weak convergence in real separable Hilbert spaces. Applications to 
composite monotone inclusion and minimization problems are demonstrated.
\end{abstract}

{\bf Keywords:} 
monotone inclusion,
monotone operator,
operator splitting,
Lipschitzian operators,
forward-backward-forward algorithm,
composite operator,
duality,
primal-dual algorithm

{\bf Mathematics Subject Classifications (2010)}: 47H05, 49M29, 49M27, 90C25 

\section{Introduction}
\label{intro}
Forward-backward-forward splitting algorithm was firstly proposed in \cite{Tseng00} for solving the problem
of finding a zero point of the sum of a maximally monotone operator $A\colon\HH\to 2^{\HH}$ and a monotone Lipschitzian 
operator $C\colon\HH\to\HH$, where $\HH$ is a real Hilbert space. This splitting algorithm plays a role in solving a large class 
of composite monotone inclusions \cite{siop2} and monotone inclusions involving the parallel sums \cite{Botb, plc6, Combettes13, Bang14} as well as applications to conposite convex optimization problem involving the infimal-convolutions \cite{siop2, Botb, plc6, Bang14, Luis11}. However, these works are limitted to deterministic setting.

Very recently, we have found out in the literature that there appears the study of 
some splitting algorithms for solving monotone inclusions in the stochastic setting as in \cite{plc14, LSB14,JCP14},
and  primal-dual splitting algorithm for composite monotone inclusions in \cite{plc14,JCP14}.
Some iterations in \cite{plc14, LSB14,JCP14}
are designed for monotone inclusions involving cocoercive operators. 
For solving monotone inclusions involving Lipschitzian monotone operators,  
one can often  use the iterations which has  the structure
of the forward-backward-forward splitting methods as cited above, but  
the  convergence of their proposed methods is no longer available, in the literature, in the stochastic setting.

The objective of this note is to study the convergence of the forward-backward-forward 
splitting in the stochastic setting for monotone inclusions  involving  Lipschitzian monotone operators
as well as for composite monotone inclusions involving  parallel sums.

In Section \ref{s:reca}, we recall some notations, background  and preliminary results. We prove 
the almost sure convergence of the stochastic forward-backward-forward splitting algorithm in Section \ref{s:sfbf}.
In the last section, we provide applications to composite monotone inclusions involving the parallel sums as well as minimization 
problems involving infimal convolutions.

\section{Notation--background and premilary results}
\label{s:reca}
Throughout, 
$\HH$, $\GG$, and $(\GG_i)_{1\leq i\leq m}$ are real  separable
Hilbert spaces. Their scalar products and  associated norms are respectively denoted 
by $\scal{\cdot}{\cdot}$ and  $\|\cdot\|$. 
We denote by $\BL(\HH,\GG)$ the space of bounded linear operators 
from $\HH$ to $\GG$. The adjoint of $L\in\BL(\HH,\GG)$ is denoted by $L^*$.
We set $\BL(\HH)=\BL(\HH,\HH)$.
$\Id$ denotes the identity operator.
The symbols $\weakly$ and $\to$ denote weak and strong convergence, respectively. We denote  
by $\ell_+^1(\NN)$ the set of summable sequences in  $\RP$.
The class of all proper lower semicontinuous convex functions from $\HH$
to $\left]-\infty,+\infty \right]$
is denoted by $\Gamma_0(\HH)$.
Let $M_1$ and $M_2$ be self-adjoint operators in $\BL(\HH)$, we write 
$
M_1\succcurlyeq M_2$ if and only if $(\forall x\in\HH)\;
\scal{M_1x}{x}\geq\scal{M_2x}{x}.
$
Let $\alpha\in\left]0,+\infty\right[$. We set
\begin{equation}
\BP_{\alpha}(\HH)=\menge{M\in\BL(\HH)}{M^* =M \quad \text{and}\quad M\succcurlyeq\alpha\Id}.
\end{equation}

Let $A\colon\HH\to 2^{\HH}$ be a set-valued operator.
The domain of $A$ is $\dom A=\menge{x\in\HH}{Ax\neq\emp}$,
 and the graph of $A$ is 
$\gra A=\menge{(x,u)\in\HH\times\HH}{u\in Ax}$.
The set of zeros 
of $A$ is  $\zer A=\menge{x\in\HH}{0\in Ax}$, and the range of $A$ is
$\ran A=\menge{u\in\HH}{(\exists\; x\in\HH)\;u\in Ax}$. 
The inverse of $A$ is $A^{-1}\colon\HH\mapsto 2^{\HH}\colon u\mapsto 
\menge{x\in\HH}{u\in Ax}$, and the resolvent of $A$ is
\begin{equation}
\label{e:resolvent}
J_A=(\Id+A)^{-1}.
\end{equation}
Moreover, $A$ is monotone if 
\begin{equation}
(\forall(x,y)\in\HH\times\HH)
(\forall(u,v)\in Ax\times Ay)\quad\scal{x-y}{u-v}\geq 0,
\end{equation}
and maximally monotone if it is monotone and there exists no 
monotone operator 
$B\colon\HH\to2^\HH$ such that $\gra A\subset\gra B$ and $A\neq B$.
We say that $A$ is uniformly monotone 
at $x\in\dom A$ if there exists an 
increasing function $\phi_A\colon\left[0,+\infty\right[\to 
\left[0,+\infty\right]$ vanishing only at $0$ such that 
\begin{equation}\label{oioi}
\big(\forall u\in Ax\big)\big(\forall (y,v)\in\gra A\big)
\quad\scal{x-y}{u-v}\geq\phi_A(\|x-y\|).
\end{equation}

Given a probability space
$(\boldsymbol{\Omega, \mathcal{F},\mathsf{P}})$,
 we denote by 
$\sigma(x)$ the  $\sigma$-field generated by a random
 vector $x\colon \boldsymbol{\Omega}\to\HH$, 
where $\HH$ is endowed with the Borel $\sigma$-algebra. The 
expectation of a random variable $x$ is denoted by $\E[x]$. The conditional expectation
of $x$ given a sub-sigma algebra $\mathcal{F}\subset \boldsymbol{\mathcal{F}}$ is denoted by 
$\E[x|\mathcal{F}]$. The conditional expectation of $x$ given $y$ is denoted by $\E[x|y]$. 

\begin{lemma}{\rm\cite[Theorem 1]{Rob85}}\label{l:rob85}
Let $(\FF_n)_{n\in\NN}$ be an increasing sequence of  sub-sigma algebras of $\boldsymbol{\mathcal{F}}$.
For every $n\in\NN$, 
let $z_n$, $\xi_n$, $\zeta_n$ and $t_n$ be  
non-negative, $\FF_n$-measurable random variable such that $(\zeta_n)_{n\in\NN}$ and $(t_n)_{n\in\NN}$ are
 summable  and 
\begin{equation}\label{eq:rob}
(\forall n\in\NN)\quad \E[z_{n+1}|\FF_n] \leq (1+t_n)z_n + \zeta_n-\xi_n 
\quad \text{$\boldsymbol{\mathsf{P}}$-a.s.}
\end{equation}
Then $(z_n)_{n\in\NN}$ converges and $(\xi_n)_{n\in\NN}$ is summable $\boldsymbol{\mathsf{P}}$-a.s.
\end{lemma}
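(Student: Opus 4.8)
The plan is to treat this as the classical almost-supermartingale (Robbins--Siegmund) convergence principle and to reduce it, in two steps, to the convergence theorem for non-negative supermartingales. The first step removes the multiplicative factor $1+t_n$: put $\theta_0=1$ and $\theta_n=\prod_{k=0}^{n-1}(1+t_k)$ for $n\geq 1$, so that $\theta_n$ is $\FF_{n-1}$-measurable, non-decreasing, $\geq 1$, and, since $\sum_k t_k<\infty$ $\boldsymbol{\mathsf{P}}$-a.s.\ (and $\log(1+t_k)\leq t_k$), its limit $\theta_\infty$ is finite $\boldsymbol{\mathsf{P}}$-a.s. As $\theta_{n+1}=(1+t_n)\theta_n$ is $\FF_n$-measurable and strictly positive, dividing \eqref{eq:rob} by $\theta_{n+1}$ and pulling it through the conditional expectation yields
\begin{equation*}
\E[y_{n+1}\mid\FF_n]\leq y_n+\zeta'_n-\xi'_n\quad\boldsymbol{\mathsf{P}}\text{-a.s.},
\end{equation*}
where $y_n=z_n/\theta_n$, $\zeta'_n=\zeta_n/\theta_{n+1}$, $\xi'_n=\xi_n/\theta_{n+1}$ are non-negative and $\FF_n$-measurable, with $0\leq\zeta'_n\leq\zeta_n$ (so $(\zeta'_n)$ is summable a.s.) and $0\leq\xi'_n\leq\xi_n$. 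Since $1\leq\theta_n\leq\theta_\infty<\infty$ a.s., convergence of $(z_n)$ and summability of $(\xi_n)$ are equivalent to the same statements for $(y_n)$ and $(\xi'_n)$, so it is enough to settle the case $t_n\equiv 0$.

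For the second step I would introduce the compensated process $p_n=y_n+\sum_{k=0}^{n-1}\xi'_k-\sum_{k=0}^{n-1}\zeta'_k$, which is $\FF_n$-measurable and, by the displayed inequality, satisfies $\E[p_{n+1}\mid\FF_n]\leq p_n$, i.e.\ $(p_n)_{n\in\NN}$ is an $(\FF_n)$-supermartingale. It is not bounded below, so I would localize: for $M\in\NN$ set $\tau_M=\inf\{n\in\NN:\sum_{k=0}^n\zeta'_k>M\}$, which is an $(\FF_n)$-stopping time because the partial sums are non-decreasing and $\FF_n$-adapted. For every $n$ one has $\sum_{k=0}^{(n\wedge\tau_M)-1}\zeta'_k\leq M$, whence $p_{n\wedge\tau_M}\geq y_{n\wedge\tau_M}-M\geq -M$; thus $(p_{n\wedge\tau_M}+M)_{n\in\NN}$ is a non-negative supermartingale, hence converges $\boldsymbol{\mathsf{P}}$-a.s., and so does $(p_{n\wedge\tau_M})_n$. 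Since $(\zeta'_n)$ is summable a.s., $\boldsymbol{\mathsf{P}}\big(\bigcup_M\{\tau_M=\infty\}\big)=1$, and on $\{\tau_M=\infty\}$ the stopped process coincides with $(p_n)$; hence $(p_n)$ converges $\boldsymbol{\mathsf{P}}$-a.s. Finally $\sum_{k=0}^{n-1}\zeta'_k$ converges a.s.\ to a finite limit, so $s_n:=y_n+\sum_{k=0}^{n-1}\xi'_k=p_n+\sum_{k=0}^{n-1}\zeta'_k$ converges a.s.; as $\sum_{k=0}^{n-1}\xi'_k$ is non-decreasing and $\leq\sup_n s_n<\infty$ (using $y_n\geq 0$), it converges, i.e.\ $(\xi'_n)$ is summable a.s., and then $y_n=s_n-\sum_{k=0}^{n-1}\xi'_k$ converges a.s. Undoing the renormalization gives the claim for $(z_n)$ and $(\xi_n)$.

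The one genuinely delicate point is that $(\zeta_n)$ and $(t_n)$ are summable only pathwise ($\boldsymbol{\mathsf{P}}$-a.s.), not in $L^1$: this forbids simply taking expectations in \eqref{eq:rob}, and is exactly what forces both the $\theta_n$-renormalization (turning the multiplicative drift into a bounded, though random, rescaling) and the truncation at level $M$ before the non-negative supermartingale convergence theorem can be applied. The remaining ingredients --- the measurability bookkeeping, the supermartingale identity for $(p_n)$, the fact that a stopped supermartingale is a supermartingale, and the ``monotone and bounded'' endgame --- are routine; one should also note that $(p_{n\wedge\tau_M}+M)$ can be rendered integrable by a further harmless localization on the $\FF_0$-set $\{y_0\leq K\}$ if one does not wish to assume $\E[y_0]<\infty$.
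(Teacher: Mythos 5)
The paper offers no proof of this lemma: it is quoted verbatim as Theorem~1 of Robbins--Siegmund \cite{Rob85}, so there is nothing internal to compare your argument against. What you have written is a correct and complete reconstruction of the classical proof of that theorem, and it is the standard one: discount by $\theta_n=\prod_{k<n}(1+t_k)$ to kill the multiplicative drift, compensate to get a supermartingale $p_n$, stop at $\tau_M$ to make it bounded below, apply the non-negative supermartingale convergence theorem, and recover convergence of $y_n$ and summability of $\xi_n'$ from the monotone-plus-bounded endgame. All the individual steps check out: $\theta_{n+1}$ is $\FF_n$-measurable and positive so it passes through the conditional expectation; $1\le\theta_n\le\theta_\infty<\infty$ a.s.\ makes the renormalization reversible for both the convergence and the summability claims; $\tau_M$ is a stopping time because the partial sums of $(\zeta'_k)$ are adapted and non-decreasing; and $j\le\tau_M$ indeed forces $\sum_{k=0}^{j-1}\zeta'_k\le M$, giving $p_{j}\ge -M$ for $j=n\wedge\tau_M$. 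The only point worth sharpening is your closing remark on integrability: truncating $y_0$ alone would not do, since $\E[\zeta_n]$ may be infinite (the hypotheses are purely pathwise); what actually saves you is that you have already stopped at $\tau_M$, so that on $\{\tau_M>n\}$ one has $\zeta'_n\le M$, whence $\E[y_{(n+1)\wedge\tau_M}]\le\E[y_{n\wedge\tau_M}]+M$ inductively and the compensator sums stay integrable as well. With that observation the localization is genuinely harmless and the proof is complete.
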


\begin{lemma}{\rm \cite[Proposition 2.3]{plc14}}
\label{p:fejer}
Let $\HH$ be a real separable Hilbert space,
 let $C$ be a non-empty closed subset of $\HH$,  let $\phi\colon\left[0,\infty\right[ \to \left[0,\infty\right[ $,
let  $(x_n)_{n\in\NN}$ be a sequence of random vectors in $\HH$. Suppose that, 
for every $x\in C$, there exist non-negative summable sequences  of random variables
$(\zeta_n(x))_{n\in\NN}$ and $(t_n(x))_{n\in\NN}$
such that, for every $n\in\NN$,  $\zeta_n(x)$ and $t_n(x)$ are $\FF_n = \sigma(x_0,\ldots,x_n)$-measurable, and 
\begin{equation}\label{eq:rob}
(\forall n\in\NN)\quad \E[\phi(\|x_{n+1}-x\|)|\FF_n] \leq (1+t_n(x))\phi(\|x_{n}-x\|) + \zeta_n(x)
\quad \text{$\boldsymbol{\mathsf{P}}$-a.s}.
\end{equation}
Suppose that $\phi$ is  strictly increasing  and $\lim_{\xi\to\infty}\phi(\xi) = +\infty$.
Then the following hold.
\begin{enumerate}
\item\label{p:fejeri} $( \|x_{n}-x\|)_{n\in\NN}$ is bounded and converges  $\boldsymbol{\mathsf{P}}$-a.s.
\item \label{p:fejerii} 
There exists a subset $\boldsymbol{\Omega}^*$ with $\boldsymbol{\mathsf{P}}(\boldsymbol{\Omega}^*) =1$
such that for every $x\in C$ and every $\omega \in\boldsymbol{\Omega}^*$, $( \|x_{n}(\omega)-x\|)_{n\in\NN}$ converges .
\item \label{p:fejeriv}
  $(x_n)_{n\in\NN}$ converges weakly $\boldsymbol{\mathsf{P}}$-a.s. to a $C$-valued random vector 
if and only if  every its weak cluster point is in $C$ $\boldsymbol{\mathsf{P}}$-a.s.
\end{enumerate}
 \end{lemma}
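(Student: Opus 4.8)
The plan is to prove Lemma~\ref{p:fejer} as a stochastic analogue of the classical deterministic Fej\'er-monotonicity machinery, using Lemma~\ref{l:rob85} as the engine for the numerical-sequence statements and then exploiting separability of $\HH$ to pass from a countable set of null exceptional events to a single one.

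\textbf{Part (i).} Fix $x\in C$. Apply Lemma~\ref{l:rob85} with $z_n=\phi(\|x_n-x\|)$, $t_n=t_n(x)$, $\zeta_n=\zeta_n(x)$, and $\xi_n=0$; the hypothesis \eqref{eq:rob} is exactly the recursion required, and $\FF_n=\sigma(x_0,\dots,x_n)$ is an increasing sequence of sub-$\sigma$-algebras with respect to which $z_n$, $t_n$, $\zeta_n$ are measurable. Lemma~\ref{l:rob85} then gives that $(\phi(\|x_n-x\|))_{n\in\NN}$ converges $\boldsymbol{\mathsf{P}}$-a.s.\ to a finite limit. Since a convergent sequence is bounded, $\sup_n\phi(\|x_n-x\|)<\infty$ a.s., and because $\phi$ is strictly increasing with $\phi(\xi)\to+\infty$, the function $\phi$ has a well-defined, continuous, strictly increasing inverse on its range; hence $\|x_n-x\|=\phi^{-1}(\phi(\|x_n-x\|))$ is bounded and converges a.s.\ by continuity of $\phi^{-1}$. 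This proves (i).

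\textbf{Part (ii).} Here is where separability is used. Let $D$ be a countable dense subset of $C$ (it exists since $C\subseteq\HH$ and $\HH$ is separable). For each $x\in D$, part (i) yields a $\boldsymbol{\mathsf{P}}$-null set $\boldsymbol{N}_x$ off which $(\|x_n(\omega)-x\|)_{n\in\NN}$ converges. Put $\boldsymbol{\Omega}^*=\boldsymbol{\Omega}\setminus\bigcup_{x\in D}\boldsymbol{N}_x$, a countable union of null sets, so $\boldsymbol{\mathsf{P}}(\boldsymbol{\Omega}^*)=1$. Fix $\omega\in\boldsymbol{\Omega}^*$; then $(x_n(\omega))_{n\in\NN}$ is bounded (its distance to some fixed $x\in D$ converges). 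For an arbitrary $x\in C$ choose $x^{(k)}\in D$ with $x^{(k)}\to x$; from $\big|\|x_n(\omega)-x\|-\|x_n(\omega)-x^{(k)}\|\big|\leq\|x-x^{(k)}\|$ and the convergence of each $(\|x_n(\omega)-x^{(k)}\|)_n$, a standard $\varepsilon/3$ argument (using boundedness of $(x_n(\omega))_n$) shows $(\|x_n(\omega)-x\|)_n$ is Cauchy, hence convergent. This gives (ii).

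\textbf{Part (iii).} The ``only if'' direction is immediate: if $x_n\weakly \bar x$ a.s.\ with $\bar x$ $C$-valued, then $\bar x$ is the unique weak cluster point a.s.\ and lies in $C$. For ``if'', work on $\boldsymbol{\Omega}^*$ intersected with the probability-one event on which every weak cluster point of $(x_n)_n$ lies in $C$. Fix such an $\omega$: $(x_n(\omega))_n$ is bounded, so it has weak cluster points, and we must show it has exactly one. Suppose $u$ and $v$ are two weak cluster points, say $x_{n_j}(\omega)\weakly u$ and $x_{m_j}(\omega)\weakly v$; by hypothesis $u,v\in C$. By part (ii), both $(\|x_n(\omega)-u\|)_n$ and $(\|x_n(\omega)-v\|)_n$ converge, hence so does $\scal{x_n(\omega)}{u-v}=\tfrac12\big(\|x_n(\omega)-v\|^2-\|x_n(\omega)-u\|^2+\|u\|^2-\|v\|^2\big)$; evaluating the limit along the two subsequences gives $\scal{u}{u-v}=\scal{v}{u-v}$, i.e.\ $\|u-v\|^2=0$, so $u=v$. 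Thus $(x_n(\omega))_n$ has a single weak cluster point and, being bounded, converges weakly to it; call the limit $\bar x(\omega)\in C$. Measurability of $\omega\mapsto\bar x(\omega)$ follows because weak limits of measurable maps are measurable (test against a countable total subset of $\HH$). This completes (iii).

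The main obstacle is the bookkeeping in Part (ii): the recursion \eqref{eq:rob} is assumed for each fixed $x\in C$ with its own exceptional null set, so one cannot directly talk about ``the'' limit of $\|x_n-x\|$ simultaneously for all $x\in C$; separability is exactly what lets us fix the null set on a countable dense set first and then transfer to all of $C$ by the uniform (in $n$) Lipschitz estimate $\big|\|x_n-x\|-\|x_n-x'\|\big|\le\|x-x'\|$ together with a.s.\ boundedness of $(x_n)_n$. Everything else is routine real analysis and Hilbert-space geometry.
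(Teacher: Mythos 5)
Your proposal follows essentially the same route as the paper: the paper does not reprove this lemma (it cites \cite[Proposition 2.3]{plc14}), but its proof of the variable-metric generalization, Proposition~\ref{p:11}, specialized to $W_n\equiv\Id$, is exactly your argument --- Robbins--Siegmund (Lemma~\ref{l:rob85}) applied to $\phi(\|x_n-x\|)$ for part \ref{p:fejeri}, a countable dense subset of $C$ plus the uniform estimate $\bigl|\|x_n-x\|-\|x_n-x'\|\bigr|\le\|x-x'\|$ for part \ref{p:fejerii}, and the polarization identity along two weakly convergent subsequences for part \ref{p:fejeriv}. Parts \ref{p:fejerii} and \ref{p:fejeriv} of your write-up are fine (indeed you supply the measurability of the weak limit, which the paper glosses over, and the boundedness you invoke in the $\varepsilon/3$ step is not actually needed).

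The one step that needs repair is in part \ref{p:fejeri}: $\phi$ is assumed strictly increasing but \emph{not} continuous, so while $\phi^{-1}$ is indeed continuous on $\ran\phi$, the a.s.\ limit $\lambda$ of $\phi(\|x_n-x\|)$ need not belong to $\ran\phi$ (it may sit at the edge of a gap in the range), and "convergence by continuity of $\phi^{-1}$" does not directly apply. The conclusion is still true, and the paper's argument shows how to get it from strict monotonicity alone: if $\|x_{k_n}-x\|\to\eta$ and $\|x_{l_n}-x\|\to\zeta>\eta$, pick $\varepsilon\in\left]0,(\zeta-\eta)/2\right[$; for large $n$ one has $\phi(\|x_{k_n}-x\|)\le\phi(\eta+\varepsilon)<\phi(\zeta-\varepsilon)\le\phi(\|x_{l_n}-x\|)$, and letting $n\to\infty$ gives $\lambda\le\phi(\eta+\varepsilon)<\phi(\zeta-\varepsilon)\le\lambda$, a contradiction. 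With that substitution your proof is complete and matches the paper's.
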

\begin{remark} 
 A sequence $(x_n)_{n\in\NN}$ satisfying  \eqref{eq:rob} is called a stochastic $\phi$-quasi-Fej\'{e}r monotone with respect 
to the target set $C$. The connections of Lemma \ref{p:fejer} to existing work can be found in   \cite[Remark 2.4]{plc14}. 
\end{remark}
 
In view of the work in \cite[Theorem 3.3]{Guad2012}, we also have
  a variable metric extension of Lemma \ref{p:fejer}. 
\begin{proposition}
\label{p:11}
Let $\HH$ be a real separable Hilbert space,
 let $C$ be a non-empty closed subset of $\HH$,  let $\phi\colon\left[0,\infty\right[ \to \left[0,\infty\right[ $,
let $\alpha\in \left]0,\infty\right[$, let $W\in \BP_{\alpha}(\HH)$ and $(W_n)_{n\in\NN}$ be a 
 sequence  in $\BP_{\alpha}(\HH)$ 
such that $W_n\to W$ pointwise,
let  $(x_n)_{n\in\NN}$ be a sequence of random vectors in $\HH$. Suppose that, 
for every $x\in C$, there exist non-negative summable sequences  of random variables
$(\zeta_n(x))_{n\in\NN}$ and $(t_n(x))_{n\in\NN}$
such that, for every $n\in\NN$,  $\zeta_n(x)$ and $t_n(x)$ are $\FF_n = \sigma(x_0,\ldots,x_n)$-measurable, and 
\begin{equation}
 \label{Ve:vmqf1}
(\forall n\in\NN)\quad \E[\phi(\|x_{n+1}-x\|_{W_{n+1}})|\FF_n] \leq (1+t_n(x))\phi(\|x_{n}-x\|_{W_n}) + \zeta_n(x)
\quad \text{$\boldsymbol{\mathsf{P}}$-a.s}.
\end{equation}
Suppose that $\phi$ is  strictly increasing  and $\lim_{\xi\to\infty}\phi(\xi) = +\infty$.
Then the following hold.
\begin{enumerate}
\item\label{p11:fejeri} $( \|x_{n}-x\|_{W_n})_{n\in\NN}$ is bounded and converges $\boldsymbol{\mathsf{P}}$-a.s.
\item \label{p11:fejerii} 
There exists a subset $\boldsymbol{\Omega}^*$ with $\boldsymbol{\mathsf{P}}(\boldsymbol{\Omega}^*) =1$
such that for every $x\in C$ and every $\omega \in\boldsymbol{\Omega}^*$, $( \|x_{n}(\omega)-x\|_{W_n})_{n\in\NN}$ converges .
\item \label{p11:fejeriv}
  $(x_n)_{n\in\NN}$ converges weakly $\boldsymbol{\mathsf{P}}$-a.s. to a $C$-valued random vector 
if and only if  every its weak cluster point is in $C$ $\boldsymbol{\mathsf{P}}$-a.s.
\end{enumerate}
\end{proposition}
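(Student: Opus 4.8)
The plan is to reduce Proposition~\ref{p:11} to Lemma~\ref{p:fejer} by absorbing the variable metric into an equivalent renorming, together with a comparison argument that controls the discrepancy between consecutive metrics $W_n$ and $W_{n+1}$. First I would fix $x\in C$ and set $\phi_n(\xi)=\phi(\xi)$ throughout, and observe that since $W_n\to W$ pointwise, the sequence $(\|W_n\|)_{n\in\NN}$ is bounded (by the uniform boundedness principle), so there is $\beta\in\RPP$ with $\alpha\Id\preccurlyeq W_n\preccurlyeq\beta\Id$ for all $n$; hence $\sqrt\alpha\,\|y\|\leq\|y\|_{W_n}\leq\sqrt\beta\,\|y\|$. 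This already gives, combined with \eqref{Ve:vmqf1}, that $(\|x_n-x\|)_{n\in\NN}$ is bounded, which is what we need to make the perturbation terms below summable.

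The key step is to rewrite \eqref{Ve:vmqf1} in terms of the \emph{fixed} metric $W$, or rather to show the sequence $\big(\phi(\|x_n-x\|_{W})\big)_{n\in\NN}$ satisfies an inequality of the type \eqref{eq:rob}. To do this I would estimate $\big|\,\|x_{n+1}-x\|^2_{W_{n+1}}-\|x_{n+1}-x\|^2_{W}\,\big|\leq\|W_{n+1}-W\|\,\|x_{n+1}-x\|^2$. The trouble is that this bound is controlled by $\|W_{n+1}-W\|$, which need not be summable; pointwise convergence of $W_n$ to $W$ does \emph{not} give a summable rate. This is exactly the obstacle that is handled in the variable-metric quasi-Fej\'er literature (Combettes--V\~u, and the reference \cite{Guad2012} cited just above the statement): one does \emph{not} pass to the fixed metric $W$, but works directly with the moving norms $\|\cdot\|_{W_n}$ and uses the two-sided equivalence above only to transfer boundedness, weak cluster points, and the final weak-convergence dichotomy. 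Concretely, for \ref{p11:fejeri} I would apply Lemma~\ref{l:rob85} to $z_n=\phi(\|x_n-x\|_{W_n})$ with $\xi_n=0$, $\zeta_n=\zeta_n(x)$, $t_n=t_n(x)$, yielding that $\big(\phi(\|x_n-x\|_{W_n})\big)_{n\in\NN}$ converges $\boldsymbol{\mathsf P}$-a.s.; since $\phi$ is strictly increasing with $\phi(\xi)\to\pinf$, a standard argument (as in the proof of Lemma~\ref{p:fejer}\ref{p:fejeri}) shows $\|x_n-x\|_{W_n}$ is bounded and converges a.s.

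For \ref{p11:fejerii}, the separability of $\HH$ lets me choose a countable dense subset $D$ of $C$, apply \ref{p11:fejeri} along $D$ to get a single almost sure event $\boldsymbol{\Omega}^*$ on which $\|x_n(\omega)-x\|_{W_n}$ converges for all $x\in D$, and then extend to all $x\in C$ by the Lipschitz continuity of $x\mapsto\|\cdot-x\|_{W_n}$ uniformly in $n$ (here the upper bound $\|y\|_{W_n}\leq\sqrt\beta\|y\|$ is what makes the family equi-Lipschitz, so the limit persists under the dense approximation). For \ref{p11:fejeriv}: the ``only if'' direction is immediate since $C$ is closed and weak limits of $C$-valued vectors lie in $C$; for ``if'', on $\boldsymbol{\Omega}^*$ the trajectory $(x_n(\omega))_{n\in\NN}$ is bounded, hence has weak cluster points, all of which lie in $C$ by hypothesis; I would then invoke a variable-metric version of the Opial lemma --- if $x$ and $x'$ are two weak cluster points in $C$, both $\|x_n-x\|_{W_n}$ and $\|x_n-x'\|_{W_n}$ converge, and since $W_n\to W$ one can expand $\|x_n-x\|^2_{W_n}-\|x_n-x'\|^2_{W_n}=\|x_n-x'\|^2_{W_n}-\|x_n-x\|^2_{W_n}$ wait --- more carefully, using $W_n\to W$ pointwise and boundedness of $(x_n)$, $\scal{W_n x_n}{x-x'}\to$ along the relevant subsequences in a way that forces $x=x'$ (this is the content of \cite[Theorem~3.3]{Guad2012} in the deterministic case, and the stochastic refinement works on the full-measure set $\boldsymbol{\Omega}^*$). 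The measurability of the limit as a $C$-valued random vector then follows as in Lemma~\ref{p:fejer}\ref{p:fejeriv}. The main obstacle, as noted, is purely the lack of a summable rate for $W_n\to W$; it is overcome not by comparison to a fixed metric but by carrying the moving metric through every step, which is legitimate precisely because $\alpha\Id\preccurlyeq W_n\preccurlyeq\beta\Id$.
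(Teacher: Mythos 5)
Your proposal is correct and follows essentially the same route as the paper: Robbins--Siegmund applied to $\phi(\|x_n-x\|_{W_n})$ plus the strict-monotonicity cluster-point argument for (i), a countable dense subset of $C$ combined with the uniform bound $\sup_n\|W_n\|<+\infty$ (Banach--Steinhaus) for (ii), and the variable-metric Opial argument via the convergence of $\scal{W_nx_n}{x-x'}$ and $W_n\to W$ pointwise for (iii). Your observation that one must carry the moving metric rather than compare to the fixed metric $W$ (for lack of a summable rate) is precisely the point of the paper's argument.
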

\begin{proof} 
\ref{p11:fejeri}: Set $(\forall n\in\NN)$ $\xi_n=\|x_n-z\|_{W_n}$.
It follows from \eqref{Ve:vmqf1} and Lemma \ref{l:rob85} that 
$(\phi(\xi_n))_{n\in\NN}$ converges $\boldsymbol{\mathsf{P}}$-a.s., say 
$\phi(\xi_n)\to\lambda$. In turn, since 
$\lim_{t\to\pinf}\phi(t)=\pinf$, $(\xi_n)_{n\in\NN}$ is bounded $\boldsymbol{\mathsf{P}}$-a.s. 
Let  $\omega \in \boldsymbol{\Omega}$ such that $(\xi_n(\omega))_{n\in\NN}$ is bounded 
and, to show that it converges, it suffices to show that it cannot
have two distinct cluster points. Suppose to the contrary that
we can extract two subsequences $(\xi_{k_n}(\omega))_{n\in\NN}$ and 
$(\xi_{l_n})_{n\in\NN}(\omega)$ such that $\xi_{k_n}(\omega)\to\eta(\omega)$ and 
$\xi_{l_n}(\omega)\to\zeta(\omega)>\eta(\omega)$, and fix 
$\varepsilon\in\left]0,(\zeta-\eta)/2\right[$. Then, for $n$
sufficiently large, $\xi_{k_n}(\omega)\leq\eta(\omega)+\varepsilon<
\zeta(\omega)-\varepsilon\leq\xi_{l_n}(\omega)$ and, since $\phi$ is strictly
increasing, $\phi(\xi_{k_n}(\omega))\leq\phi(\eta(\omega)+\varepsilon)<
\phi(\zeta(\omega)-\varepsilon)\leq\phi(\xi_{l_n}(\omega))$. Taking the limit as
$n\to\pinf$ yields $\lambda(\omega)\leq\phi(\eta(\omega)+\varepsilon)<
\phi(\zeta(\omega)-\varepsilon)\leq\lambda(\omega)$, which is impossible.

\ref{p11:fejerii}: Since $\HH$ is separable, so is $C$ and hence there exists 
 a countable  subset $X$ of $C$ such that $\overline{X} = C $. In view of 
 \ref{p11:fejeri}, for each $x\in X$, there exists a subset $ \boldsymbol{\Omega}_x$ with probability 
$1$ such that $( \|x_{n}(\omega)-x\|_{W_n})_{n\in\NN}$ converges for every $\omega \in \boldsymbol{\Omega}_x$.
Define $\boldsymbol{\Omega}^* = \bigcap_{x\in X}\boldsymbol{\Omega}_x $. Since $X$ is countable,
$\boldsymbol{\mathsf{P}}(\boldsymbol{\Omega}^*) =1$.
 Now, let $x_0\in C$ and $\omega_0\in\boldsymbol{\Omega}^*$. Then, there exists 
a sequence $(c_k)_{k\in\NN}$ in $X$ such that $c_k\to x_0$. 
By \ref{p11:fejeri}, we have 
\begin{equation}
(\forall k\in\NN)(\exists \tau_k\colon \boldsymbol{\Omega}\to \left[0,+\infty\right[)
(\forall \omega \in\boldsymbol{\Omega}_{c_k} )\quad \|x_{n}(\omega)-c_k\|_{W_n}\to\tau_k(\omega).
\end{equation}
Moreover, set $\mu = \sup_{n\in\NN}\|W_n\|$. Then $\mu < +\infty$ by  Banach-Steinhaus Theorem.
Then, for every $n\in\NN$ and $k\in\NN$, we have
\begin{alignat}{2}
 -\sqrt{\mu} \|c_k-x_0\|\leq
-\|c_k-x_0\|_{W_n} \leq  \|x_{n}(\omega_0)-x_0\|_{W_n} -\|x_{n}(\omega_0)-c_k\|_{W_n} &\leq \|c_k-x_0\|_{W_n}\notag\\
&\leq \sqrt{\mu} \|c_k-x_0\|.
\end{alignat}
Therefore,
\begin{alignat}{2}
(\forall k\in\NN)\quad -\sqrt{\mu} \|c_k-x_0\|&\leq \varliminf_{n\to\infty}  \|x_{n}(\omega_0)-x_0\|_{W_n}
 - \lim_{n\to\infty}\|x_{n}(\omega_0)-c_k\|_{W_n}\notag\\
&= \varliminf_{n\to\infty}  \|x_{n}(\omega_0)-x_0\|_{W_n}
 - \tau_k(\omega_0)\notag\\
&\leq \varlimsup_{n\to\infty}  \|x_{n}(\omega_0)-x_0\|_{W_n}
 - \tau_k(\omega_0)\notag\\
&\leq \sqrt{\mu} \|c_k-x_0\|.
\end{alignat}
Now, let $k\to\infty$, we get  $ \lim_{n\to\infty}  \|x_{n}(\omega_0)-x_0\|_{W_n}
 =\lim_{k\to\infty} \tau_k(\omega_0) $ which proves \ref{p11:fejerii}.

\ref{p11:fejeriv}:
Necessity is clear. To show sufficiency,  let $\Omega$ be the set of all $\omega$ such that every weak 
sequential cluster point of $(x_n(\omega))_{n\in\NN}$ is in $C$. Then $\Omega$ has probability $1$,
so is $\Omega_* = \Omega\cap \boldsymbol{\Omega}^*$. 
 Let $\omega\in \Omega_*$ and 
$x(\omega)$ and $y(\omega)$ be two weak cluster points of $(x_n(\omega))_{n\in\NN}$,
say $x_{k_n}(\omega)\weakly x(\omega)$ and $x_{l_n}(\omega)\weakly y(\omega)$. Then it follows from
 \ref{p11:fejerii} that $(\|x_n(\omega)-x(\omega)\|_{W_n})_{n\in\NN}$ 
and $(\|x_n(\omega)-y(\omega)\|_{W_n})_{n\in\NN}$ converge. Moreover,
$\|x(\omega)\|_{W_n}^2=\scal{W_nx(\omega)}{x(\omega)}\to\scal{Wx(\omega)}{x(\omega)}$ and, likewise,
$\|y(\omega)\|_{W_n}^2\to\scal{Wy(\omega)}{y(\omega)}$. Therefore, since
\begin{alignat}{2}
(\forall n\in\NN)\quad\scal{W_nx_n(\omega)}{x(\omega)-y(\omega)} 
&=\frac12\big(\|x_n(\omega)-y(\omega)\|_{W_n}^2-\|x_n(\omega)-x(\omega)\|_{W_n}^2\notag\\
&\quad+\|x(\omega)\|_{W_n}^2-\|y(\omega)\|_{W_n}^2\big),
\end{alignat}
the sequence $(\scal{W_nx_n(\omega)}{x(\omega)-y(\omega)})_{n\in\NN}$ converges, say 
$\scal{W_n x_n(\omega)}{x(\omega)-y(\omega)}\to\lambda(\omega)\in\RR$, which implies that
\begin{equation}
\label{Ve:elnido2012-03-07b}
\scal{x_n(\omega)}{W_n(x(\omega)-y(\omega))}\to\lambda(\omega)\in\RR.
\end{equation}
However, since $x_{k_n}(\omega)\weakly x(\omega)$ and $W_{k_n}(x(\omega)-y(\omega))\to W(x(\omega)-y(\omega))$, 
it follows from \eqref{Ve:elnido2012-03-07b} and 
\cite[Lemma~2.41(iii)]{livre1} that $\scal{x(\omega)}{W(x(\omega)-y(\omega))}=\lambda(\omega)$. 
Likewise, passing to the limit along the subsequence 
$(x_{l_n}(\omega))_{n\in\NN}$ in \eqref{Ve:elnido2012-03-07b} yields
$\scal{y(\omega)}{W(x(\omega)-y(\omega))}=\lambda$. Thus,
\begin{alignat}{2}
\label{Ve:elnido2012-03-08b}
0=\scal{x(\omega)}{W(x(\omega)-y(\omega))}-\scal{y(\omega)}{W(x(\omega)-y(\omega))} 
&=\scal{x(\omega)-y(\omega)}{W(x(\omega)-y(\omega))}\notag\\
&\geq\alpha\|x(\omega)-y(\omega)\|^2.
\end{alignat}
This shows that $x(\omega)=y(\omega)$. Upon invoking 
\ref{p11:fejerii} and
\cite[Lemma~2.38]{livre1}, 
we conclude that $x_n(\omega)\weakly x(\omega)$ and hence we obtain the conclusion.
\end{proof}

 \section{A stochastic forward-backward-forward splitting algorithm}
\label{s:sfbf}
\noindent
The forward-backward-forward splitting algorithm 
was firstly proposed in \cite{Tseng00} to solve inclusion involving
the sum of a maximally monotone operator and a Lipschitzian  monotone operator. 
In \cite{siop2}, it was revisited to include computational errors. 
Below, we extend it to a stochastic setting. 
 The following theorem is 
a stochastic version of \cite[Theorem 3.1]{Bang13}.

\begin{theorem}
\label{r:1}
Let $\KKK$ be a real  separable Hilbert space with the scalar product $\pscal{\cdot}{\cdot}$
and the associated norm $|||\cdot|||$.
Let $\alpha$ and $\beta$ be in $ \left]0,+\infty\right[$, 
let $(\eta_n)_{n\in\NN}$ be a sequence in $\ell_{+}^1(\NN)$,
and let $(\UU_n)_{n\in\NN}$ 
be a sequence in $\BL(\KKK)$ such that 
\begin{equation}
\label{e:tse}
\mu =\sup_{n\in\NN} \|\UU_n\| < +\infty\quad 
\text{and}\quad(\forall \xx\in\KKK)\; (1+\eta_n)\pscal{\xx}{\UU_{n+1}\xx} \geq \pscal{\xx}{\UU_{n}\xx} \geq \alpha |||\xx|||^2.
\end{equation}
Let $\AAA\colon\KKK\to 2^{\KKK}$ be maximally monotone,
let $\BB\colon\KKK\to \KKK$ be a monotone and $\beta$-Lipschitzian operator on
$\KKK$ such that $\zer(\AAA+\BB) \not= \emp$.
Let $(\aaa_n)_{n\in\NN}$, $(\bb_n)_{n\in\NN}$, and $(\cc_n)_{n\in\NN}$ 
be  sequences of  square integrable $\KKK$-valued random vectors.
Let $\xx_0$ be a square integrable $ \KKK$-valued random vector,
 let $\varepsilon \in \left]0, 1/(\beta\mu +1)\right[$,
let $(\gamma_n)_{n\in\NN}$ be a sequence 
in $\left[\varepsilon, (1-\varepsilon)/(\beta\mu)\right]$, and set
\begin{equation}
\label{e:Tsengaa}
(\forall n\in\NN)\quad
\begin{array}{l}
\left\lfloor
\begin{array}{l}
\yy_n= \xx_n-\gamma_n\UU_n(\BB \xx_n+\aaa_n)\\[1mm]
\pp_n = J_{\gamma_n \UU_n \AAA}\yy_n + \bb_n\\
\qq_n = \pp_n -\gamma_n \UU_n(\BB\pp_n+\cc_n)\\
\xx_{n+1} = \xx_n -\yy_n +\qq_n.
\end{array}
\right.\\[2mm]
\end{array}
\end{equation}
 Suppose that 
$\big(\sqrt{\E[|||\boldsymbol{a}_n |||^2|\FF_n]}\big)_{n\in\NN}$, $\big(\sqrt{\E[|||\boldsymbol{b}_n |||^2|\FF_n]}\big)_{n\in\NN}$,
 and $\big(\sqrt{\E[|||\boldsymbol{c}_n |||^2|\FF_n]}\big)_{n\in\NN}$ 
are  summable \text{$\boldsymbol{\mathsf{P}}$-a.s.},
the following hold for some  $\zer(\AAA+\BB)$-valued random vector $\overline{\xx}$.
\begin{enumerate}
 \item
\label{Tsengia} $\sum_{n\in\NN}\E[|||\xx_n-\pp_n|||^2|\FF_n] < \pinf$ and 
$\sum_{n\in\NN}\E[|||\yy_n-\qq_n|||^2| \FF_n] < \pinf$ $\boldsymbol{\mathsf{P}}$-a.s.
\item \label{Tsengiia}
$\xx_n \weakly \overline{\xx}$ and 
$J_{\gamma_n\UU_n\AAA}(\xx_n-\gamma_n\UU_n\BB\xx_n) \weakly \overline{\xx}$ $\boldsymbol{\mathsf{P}}$-a.s.
\item\label{Tsengiii} Suppose that one of the following is satisfied for some subset 
$\widetilde{\boldsymbol{\Omega}}\subset \boldsymbol{\Omega} $
 with $\boldsymbol{\mathsf{P}}( \widetilde{\boldsymbol{\Omega}}) =1$.
\begin{enumerate}
 \item\label{Tsengiiiba} $\AAA + \BB$ is demiregular 
(see \cite[Definition 2.3]{plc2010}) at $\overline{\xx}(\omega)$ for every $\omega\in\widetilde{\boldsymbol{\Omega}} $.
\item \label{Tsengiiica}$\AAA$ or $\BBB$ is uniformly monotone at $\overline{\xx}(\omega)$ 
for every $\omega\in\widetilde{\boldsymbol{\Omega}}$.
\end{enumerate}
Then $\xx_n\to\overline{\xx}$ and $J_{\gamma_n\UU_n\AAA}(\xx_n-\gamma_n\UU_n\BB\xx_n) \to\overline{\xx}$ $\boldsymbol{\mathsf{P}}$-a.s.
\end{enumerate}
\end{theorem}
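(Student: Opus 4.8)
The plan is to recast \eqref{e:Tsengaa} as a variable-metric stochastic quasi-Fej\'er iteration with respect to the target set $C=\zer(\AAA+\BB)$ and to invoke Proposition~\ref{p:11} with $\phi\colon t\mapsto t^2$. First I would put $\WW_n=\UU_n^{-1}$ for every $n\in\NN$. By \eqref{e:tse}, $\WW_n\in\BP_{1/\mu}(\KKK)$ and $\sup_n\|\WW_n\|\leq 1/\alpha$, and inverting the inequality $\pscal{\UU_n\xx}{\xx}\leq(1+\eta_n)\pscal{\UU_{n+1}\xx}{\xx}$ yields $\WW_{n+1}\preccurlyeq(1+\eta_n)\WW_n$; writing $\chi_n=\prod_{k\geq n}(1+\eta_k)$, the operators $\chi_n\WW_n$ form a nonincreasing sequence of positive operators, hence converge strongly, and since $\chi_n\downarrow 1$ we obtain $\WW_n\to\WW$ pointwise for some $\WW\in\BP_{1/\mu}(\KKK)$. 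Consequently the norms $|||\cdot|||_{\WW_n}$ are, uniformly in $n$, equivalent to $|||\cdot|||$; moreover, $\AAA$ being maximally monotone, $J_{\gamma_n\UU_n\AAA}$ is the resolvent of $\AAA$ for the scalar product $\pscal{\WW_n\cdot}{\cdot}$, hence nonexpansive for $|||\cdot|||_{\WW_n}$, and $(\forall\zz\in\KKK)$ $\tfrac1{\gamma_n}\WW_n(\zz-J_{\gamma_n\UU_n\AAA}\zz)\in\AAA(J_{\gamma_n\UU_n\AAA}\zz)$. This puts the iteration in the setting of Proposition~\ref{p:11}.

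The heart of the proof is a one-step estimate. Fix $\zz\in C$, so that $-\BB\zz\in\AAA\zz$, and let $\tilde\xx_{n+1}$ denote the outcome of one \emph{error-free} step of \eqref{e:Tsengaa} started at $\xx_n$; then $\tilde\pp_n:=J_{\gamma_n\UU_n\AAA}(\xx_n-\gamma_n\UU_n\BB\xx_n)$ is $\FF_n$-measurable. Arguing as in the deterministic variable-metric forward--backward--forward estimate of \cite[Theorem~3.1]{Bang13} — via the resolvent inclusion above, the monotonicity of $\AAA$ tested against $(\zz,-\BB\zz)$, the monotonicity of $\BB$, and the bound $\pscal{\BB\tilde\pp_n-\BB\xx_n}{\UU_n(\BB\tilde\pp_n-\BB\xx_n)}\leq\mu\beta^2|||\tilde\pp_n-\xx_n|||^2$ — one obtains
\[
|||\tilde\xx_{n+1}-\zz|||_{\WW_n}^2\leq|||\xx_n-\zz|||_{\WW_n}^2-\bigl(1-\gamma_n^2\beta^2\mu^2\bigr)\,|||\xx_n-\tilde\pp_n|||_{\WW_n}^2,
\]
where $\gamma_n\leq(1-\varepsilon)/(\beta\mu)$ makes the coefficient bounded below by $\kappa:=1-(1-\varepsilon)^2>0$. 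Separately, nonexpansiveness of the resolvent, the $\beta$-Lipschitz property of $\BB$, $\|\UU_n\|\leq\mu$, and $\gamma_n\beta\mu\leq 1-\varepsilon$ give $|||\xx_{n+1}-\tilde\xx_{n+1}|||_{\WW_n}\leq e_n$, a fixed linear combination of $|||\aaa_n|||,|||\bb_n|||,|||\cc_n|||$ with coefficients depending only on $\varepsilon,\beta,\mu,\alpha$. Expanding $|||\xx_{n+1}-\zz|||_{\WW_n}^2$ about $\tilde\xx_{n+1}$, using the displayed inequality (hence $|||\tilde\xx_{n+1}-\zz|||_{\WW_n}\leq|||\xx_n-\zz|||_{\WW_n}$) together with $2|||\xx_n-\zz|||_{\WW_n}\leq 1+|||\xx_n-\zz|||_{\WW_n}^2$, and finally multiplying by $1+\eta_n$ to pass from $\WW_n$ to $\WW_{n+1}$, yields
\[
|||\xx_{n+1}-\zz|||_{\WW_{n+1}}^2\leq(1+\eta_n)(1+e_n)\,|||\xx_n-\zz|||_{\WW_n}^2-\kappa\,|||\xx_n-\tilde\pp_n|||_{\WW_n}^2+(1+\eta_n)(e_n+e_n^2).
\]

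Taking $\E[\,\cdot\,|\FF_n]$ in this last inequality and using that $\xx_n,\tilde\pp_n,\WW_n,\eta_n,\gamma_n$ are $\FF_n$-measurable, that $\E[|||\aaa_n|||\,|\FF_n]\leq\sqrt{\E[|||\aaa_n|||^2|\FF_n]}$ (Jensen; likewise for $\bb_n,\cc_n$), and that an a.s.\ summable nonnegative sequence is a.s.\ square-summable, one arrives at
\[
\E\bigl[\,|||\xx_{n+1}-\zz|||_{\WW_{n+1}}^2\,\bigm|\,\FF_n\,\bigr]\leq(1+t_n(\zz))\,|||\xx_n-\zz|||_{\WW_n}^2-\kappa\,|||\xx_n-\tilde\pp_n|||_{\WW_n}^2+\zeta_n(\zz),
\]
with $t_n(\zz),\zeta_n(\zz)$ nonnegative, $\FF_n$-measurable and a.s.\ summable; this is precisely \eqref{Ve:vmqf1} for $\phi=|\cdot|^2$. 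Lemma~\ref{l:rob85} then gives $\sum_n|||\xx_n-\tilde\pp_n|||_{\WW_n}^2<\pinf$ $\boldsymbol{\mathsf{P}}$-a.s.; combined with the uniform norm equivalence, with $|||\pp_n-\tilde\pp_n|||\lesssim|||\aaa_n|||+|||\bb_n|||$ and $|||\yy_n-\qq_n|||\lesssim|||\xx_n-\pp_n|||+|||\aaa_n|||+|||\cc_n|||$, and with another round of conditional expectations, this proves~\ref{Tsengia}; in particular $|||\xx_n-\tilde\pp_n|||\to 0$ $\boldsymbol{\mathsf{P}}$-a.s., and Proposition~\ref{p:11}\ref{p11:fejeri} shows $(\xx_n)_{n\in\NN}$ is a.s.\ bounded. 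For~\ref{Tsengiia} I would check that every weak cluster point of $(\xx_n)$ lies a.s.\ in $C$: on the a.s.\ event where $(\xx_n)$ is bounded and $|||\xx_n-\tilde\pp_n|||\to 0$, if $\xx_{k_n}\weakly\mathsf{x}$ then also $\tilde\pp_{k_n}\weakly\mathsf{x}$, and $u_{k_n}:=\tfrac1{\gamma_{k_n}}\WW_{k_n}(\xx_{k_n}-\tilde\pp_{k_n})+\BB\tilde\pp_{k_n}-\BB\xx_{k_n}\in(\AAA+\BB)\tilde\pp_{k_n}$ converges strongly to $0$ (since $1/\gamma_{k_n}\leq 1/\varepsilon$, $\|\WW_{k_n}\|\leq 1/\alpha$, $|||\xx_{k_n}-\tilde\pp_{k_n}|||\to 0$ and $\BB$ is Lipschitz); as $\AAA+\BB$ is maximally monotone ($\BB$ being monotone, continuous and everywhere defined), its graph is sequentially closed for weak convergence of the first component and strong convergence of the second \cite{livre1}, so $0\in(\AAA+\BB)\mathsf{x}$, i.e.\ $\mathsf{x}\in C$. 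Proposition~\ref{p:11}\ref{p11:fejeriv} then produces a $C$-valued random vector $\overline{\xx}$ with $\xx_n\weakly\overline{\xx}$, and $J_{\gamma_n\UU_n\AAA}(\xx_n-\gamma_n\UU_n\BB\xx_n)=\tilde\pp_n\weakly\overline{\xx}$ since $|||\xx_n-\tilde\pp_n|||\to 0$, all $\boldsymbol{\mathsf{P}}$-a.s.

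For~\ref{Tsengiii}, fix $\omega$ in the intersection of $\widetilde{\boldsymbol{\Omega}}$ with the a.s.\ event on which $\xx_n\weakly\overline{\xx}$, $|||\xx_n-\tilde\pp_n|||\to 0$ and $u_n\to 0$; since $u_n\in(\AAA+\BB)\tilde\pp_n$ with $u_n\to 0$ and $\tilde\pp_n\weakly\overline{\xx}$, hypothesis~\ref{Tsengiiiba} (demiregularity of $\AAA+\BB$ at $\overline{\xx}$) forces $\tilde\pp_n\to\overline{\xx}$, hence $\xx_n\to\overline{\xx}$. Under~\ref{Tsengiiica}, writing $u_n-\BB\tilde\pp_n=\tfrac1{\gamma_n}\WW_n(\xx_n-\tilde\pp_n)-\BB\xx_n\in\AAA\tilde\pp_n$ and testing the (uniform) monotonicity of $\AAA$ against $(\overline{\xx},-\BB\overline{\xx})\in\gra\AAA$, respectively adding the (uniform) monotonicity of $\BB$, one bounds $\phi_{\AAA}(|||\tilde\pp_n-\overline{\xx}|||)$ — resp.\ $\phi_{\BB}(|||\xx_n-\overline{\xx}|||)$ — above by a sequence tending to $0$ (the cross-terms involving $\tfrac1{\gamma_n}\WW_n(\xx_n-\tilde\pp_n)$ and $\BB\xx_n-\BB\overline{\xx}$ vanishing because $|||\xx_n-\tilde\pp_n|||\to 0$ and $\BB$ is Lipschitz), so $|||\tilde\pp_n-\overline{\xx}|||\to 0$, resp.\ $|||\xx_n-\overline{\xx}|||\to 0$; in either case $\xx_n\to\overline{\xx}$ and $\tilde\pp_n\to\overline{\xx}$ $\boldsymbol{\mathsf{P}}$-a.s. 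I expect the main obstacle to be the one-step estimate and, above all, its passage through the conditional expectation: one must isolate the deterministic Tseng mechanism — responsible for the uniformly negative term $-\kappa|||\xx_n-\tilde\pp_n|||^2$ and hence, after conditioning, for the summability behind~\ref{Tsengia} — from the stochastic perturbations, and then show, using the $\sqrt{\E[|||\cdot|||^2|\FF_n]}$-summability hypotheses and $2a\leq 1+a^2$, that those perturbations contribute only to the factor $1+t_n(\zz)$ and the additive remainder $\zeta_n(\zz)$ of \eqref{Ve:vmqf1}; the variable metric adds only the bookkeeping absorbed by $\WW_n\to\WW$ and the factors $1+\eta_n$.
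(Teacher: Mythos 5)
Your proposal is correct and follows essentially the same route as the paper: decompose each iteration into an exact Tseng step $\widetilde{\xx}_{n+1}$ plus an error, derive the variable-metric estimate $|||\widetilde{\xx}_{n+1}-\zz|||_{\UU_n^{-1}}^2\leq|||\xx_n-\zz|||_{\UU_n^{-1}}^2-\kappa|||\xx_n-\widetilde{\pp}_n|||^2$, condition on $\FF_n$ to obtain the stochastic quasi-Fej\'er inequality, and conclude via Lemma~\ref{l:rob85}, Proposition~\ref{p:11} and the demiclosedness of $\gra(\AAA+\BB)$. The only (harmless) divergences are bookkeeping: you absorb the cross term with $2a\leq 1+a^2$ into the multiplicative factor $1+t_n(\zz)$, whereas the paper first proves a norm-level quasi-Fej\'er inequality to get a.s.\ boundedness and then uses that bound, and you explicitly verify the pointwise convergence $\UU_n^{-1}\to\WW$ that the paper's appeal to Proposition~\ref{p:11} leaves implicit.
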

\begin{proof}
It follows from \cite[Lemma 3.7]{Varm12} that the
sequences $(\xx_n)_{n\in\NN}$, $(\yy_n)_{n\in\NN}$, 
$(\pp_n)_{n\in\NN}$ and $(\qq_n)_{n\in\NN}$ are well defined. 
Moreover, using \cite[Lemma~2.1(i)(ii)]{Guad2012} and \eqref{e:tse},
for every sequence of random vectors $\KKK$-valued $(\boldsymbol{z}_n)_{n\in\NN}$,
we have
\begin{equation}
\label{e:absol}
 \sum_{n\in\NN}\sqrt{\E[|||\boldsymbol{z}_n |||^2|\FF_n]} < +\infty\quad \text{$\boldsymbol{\mathsf{P}}$-a.s.}
\quad \Leftrightarrow\quad 
\sum_{n\in\NN}\sqrt{\E[|||\boldsymbol{z}_n|||^{2}_{\UU_{n}^{-1}}|\FF_n]} < +\infty \quad \text{$\boldsymbol{\mathsf{P}}$-a.s.}
\end{equation}
and 
\begin{equation}
\label{e:absolun}
\sum_{n\in\NN}\sqrt{\E[|||\boldsymbol{z}_n |||^2|\FF_n]} < +\infty\quad \text{$\boldsymbol{\mathsf{P}}$-a.s.}
\quad \Leftrightarrow\quad  
\sum_{n\in\NN}\sqrt{\E[|||\boldsymbol{z}_n|||^{2}_{\UU_{n}}|\FF_n]} < +\infty\quad \text{$\boldsymbol{\mathsf{P}}$-a.s.}
\end{equation}
Let us set, for every $n\in\NN$,
\begin{equation}
\label{e:Tseng}
 \begin{cases}
 \widetilde{\yy}_n= \xx_n-\gamma_n \UU_n\BB\xx_n\\
\widetilde{\pp}_n = J_{\gamma_n \UU_n \AAA} \widetilde{\yy}_n\\
\widetilde{\qq}_n = \widetilde{\pp}_n -\gamma_n \UU_n\BB\widetilde{\pp}_n\\
\widetilde{\xx}_{n+1} = \xx_n - \widetilde{\yy}_n +\widetilde{\qq}_n,
\end{cases}
\;\; \text{and} \quad
\begin{cases}
\uu_n = \gamma^{-1}_n\UU^{-1}_n(\xx_n-\widetilde{\pp}_n) + \BB\widetilde{\pp}_n-\BB\xx_n\\
\ee_n = \widetilde{\xx}_{n+1} - \xx_{n+1}\\
\dd_n =\qq_n-\widetilde{\qq}_n +  \widetilde{\yy}_n-\yy_{n}.
\end{cases}
\end{equation}
Then \eqref{e:Tseng} yields 
\begin{equation}
 (\forall n\in\NN)\quad \uu_n = \gamma^{-1}_n\UU^{-1}_n(\ \widetilde{\yy}_n-\widetilde{\pp}_n) 
+ \BB\widetilde{\pp}_n \in \AAA\widetilde{\pp}_n+\BB\widetilde{\pp}_n,
\end{equation}
and \eqref{e:Tseng}, Lemma \cite[Lemma 3.7(ii)]{Varm12},
and the Lipschitzianity of $\BB$ on $\KKK$ yield 
\begin{equation}
\label{e:abs1}
(\forall n\in\NN)\quad
\begin{cases}
|||\yy_n -\widetilde{\yy}_n|||_{\UU_{n}^{-1}}& \leq  
(\beta\mu)^{-1}|||\aaa_n |||_{\UU_{n}}\\
 |||\pp_n -\widetilde{\pp}_n |||_{\UU_{n}^{-1}}& \leq   
|||\bb_n |||_{\UU_{n}^{-1}} + (\beta\mu)^{-1} |||\aaa_n |||_{\UU_{n}}\\
 |||\qq_n -\widetilde{\qq}_n |||_{\UU_{n}^{-1}} &\leq 
2\big( |||\bb_n |||_{\UU_{n}^{-1}} + (\beta\mu)^{-1} |||\aaa_n |||_{\UU_{n}}\big)+ (\beta\mu)^{-1} |||\cc_n |||_{\UU_{n}}.
\end{cases}
\end{equation}
Since $\big(\sqrt{\E[|||\boldsymbol{a}_n |||^2|\FF_n]}\big)_{n\in\NN}$, $\big(\sqrt{\E[|||\boldsymbol{b}_n |||^2|\FF_n]}\big)_{n\in\NN}$,
 and $\big(\sqrt{\E[|||\boldsymbol{c}_n |||^2|\FF_n]}\big)_{n\in\NN}$ 
are  summable \text{$\boldsymbol{\mathsf{P}}$-a.s.}, using Jensen's inequality, we derive from
\eqref{e:absol}, \eqref{e:absolun}, \eqref{e:Tseng}, and \eqref{e:abs1} that
\begin{alignat}{2}
 \label{e:pabsl}
\begin{cases}
 \sum_{n\in\NN}\E[|||\pp_n -\widetilde{\pp}_n ||| |\FF_n] < +\infty
\quad \text{and} \quad  \sum_{n\in\NN} \E[|||\pp_n -\widetilde{\pp}_n |||_{\UU_{n}^{-1}}|\FF_n] < +\infty\quad\text{$\boldsymbol{\mathsf{P}}$-a.s.} \\
 \sum_{n\in\NN}\E[|||\qq_n -\widetilde{\qq}_n ||| |\FF_n] < +\infty
\quad \text{and} \quad  \sum_{n\in\NN}\E[|||\qq_n -\widetilde{\qq}_n |||_{\UU_{n}^{-1}}|\FF_n]   < +\infty\quad\text{$\boldsymbol{\mathsf{P}}$-a.s.} \\
 \sum_{n\in\NN}\E[|||\dd_n ||| |\FF_n] < +\infty
\quad \text{and} \quad  \sum_{n\in\NN} \E[|||\dd_n |||_{\UU_{n}^{-1}} |\FF_n]  < +\infty\quad \text{$\boldsymbol{\mathsf{P}}$-a.s.}
\end{cases}
\end{alignat}
Noting that 
\begin{alignat}{2}
\label{eq:aa1}
2\E[ ||| \yy_n -\widetilde{\yy}_n|||_{\UU_{n}^{-1}}^2 |\FF_n]
 \leq 2 (\beta\mu)^{-2}\E[|||\aaa_n |||^{2}_{\UU_{n}}|\FF_n ],
\end{alignat}
and 
\begin{alignat}{2}
\label{eq:aa2}
2\E[ ||| \qq_n -\widetilde{\qq}_n|||_{\UU_{n}^{-1}}^2 |\FF_n] \leq 
24\big( \E[ |||\bb_n |||^{2}_{\UU_{n}^{-1}}|\FF_n] +(\beta\mu)^{-2} \E[ |||\aaa_n |||^{2}_{\UU_{n}}+|||\cc_n |||^{2}_{\UU_{n}}|\FF_n]  \big)
\end{alignat}
Therefore, upon setting $c = \max\{26(\beta\mu)^{-2}, 24\}$,
and adding \eqref{eq:aa1} and \eqref{eq:aa2}, we get 
 \begin{alignat}{2}
\label{eq:aa3}
2\E[ ||| \yy_n -\widetilde{\yy}_n|||_{\UU_{n}^{-1}}^2 |\FF_n]
 +2\E[ ||| \qq_n -\widetilde{\qq}_n |||_{\UU_{n}^{-1}}^2 |\FF_n]
 &\leq c \big(\E[ ||| \aaa_n |||_{\UU_{n}}^2 |\FF_n]
 +\E[ ||| \bb_n |||_{\UU_{n}^{-1}}^2 |\FF_n] \notag\\
&\hspace{3cm}+\E[ ||| \cc_n |||_{\UU_{n}}^2 |\FF_n]\big).
\end{alignat}
Now, using \eqref{eq:aa3}, \eqref{eq:cond1}, \eqref{eq:cond2},
\eqref{e:absol}, \eqref{e:absolun} and \eqref{e:Tseng},
 we have
 \begin{alignat}{2}
 \label{e:2c}
 \sum_{n\in\NN}\E[ |||\dd_n |||_{\UU_{n}^{-1}}^2 |\FF_n] &\leq 
2\sum_{n\in\NN}\E[ ||| \yy_n -\widetilde{\yy}_n|||_{\UU_{n}^{-1}}^2 |\FF_n]
 +2\sum_{n\in\NN}\E[ ||| \qq_n -\widetilde{\qq}_n |||_{\UU_{n}^{-1}}^2 |\FF_n]\notag \\
 &\leq c \bigg(\sum_{n\in\NN}\E[ ||| \aaa_n |||_{\UU_{n}}^2 |\FF_n]
 +\sum_{n\in\NN}\E[ ||| \bb_n |||_{\UU_{n}^{-1}}^2 |\FF_n] +\sum_{n\in\NN}\E[ ||| \cc_n |||_{\UU_{n}}^2 |\FF_n]\bigg)\notag\\
 &\leq c\tau_0\bigg(\sum_{n\in\NN}\sqrt{\E[ ||| \aaa_n |||_{\UU_n}^2 |\FF_n]}
 +\sum_{n\in\NN}\sqrt{\E[ ||| \bb_n |||_{\UU_{n}^{-1}}^2 |\FF_n] }\notag\\
&\hspace{7cm} + \sum_{n\in\NN}\sqrt{\E[ ||| \cc_n |||_{\UU_n}^2 |\FF_n] }\bigg)\notag\\
 &<+\infty\quad \text{$\boldsymbol{\mathsf{P}}$-a.s., }
 \end{alignat}
 where we define 
 \begin{equation}
 \tau_0 =\sup_{n\in\NN}\bigg\{\sqrt{\E[ ||| \aaa_n |||_{\UU_n}^2 |\FF_n]}, 
\sqrt{\E[ ||| \bb_n |||_{\UU^{-1}_n}^2 |\FF_n]}, \sqrt{\E[ ||| \cc_n |||_{\UU_n}^2 |\FF_n]} \bigg\}
  < +\infty\quad \text{$\boldsymbol{\mathsf{P}}$-a.s.}
 \end{equation}
Now, let $\xx\in \zer(\AAA+\BB)$. Then, for every $n\in\NN$,
 $(\xx,-\gamma_n \UU_n\BB\xx) \in\gra (\gamma_n\UU_n\AAA)$
and \eqref{e:Tseng} yields 
$(\widetilde{\pp}_n,\widetilde{\yy}_n-\widetilde{\pp}_n) \in \gra(\gamma_n \UU_n\AAA)$. 
Hence, by  monotonicity of $\UU_n\AAA$ with respect to the scalar product 
$\pscal{\cdot}{\cdot}_{\UU_{n}^{-1}}$, 
we have  
$
\pscal{\widetilde{\pp}_n-\xx}{\widetilde{\pp}_n-\widetilde{\yy}_n-\gamma_n\UU_n\BB\xx}_{\UU_{n}^{-1}} \leq 0.
$
Moreover, by  monotonicity of $\UU_n\BB$ with respect to 
the scalar product $\pscal{\cdot}{\cdot}_{\UU_{n}^{-1}}$, we also have 
$
\pscal{\widetilde{\pp}_n-\xx}{\gamma_n \UU_n \BB\xx -\gamma_n \UU_n \BB\widetilde{\pp}_n }_{\UU_{n}^{-1}}\leq 0.
$
By adding the last two inequalities, we obtain 
\begin{equation}
(\forall n\in\NN)\quad
\pscal{\widetilde{\pp}_n-\xx}{\widetilde{\pp}_n-\widetilde{\yy}_n-\gamma_n \UU_n\BB\widetilde{\pp}_n}_{\UU_{n}^{-1}} \leq 0.
\end{equation}
In turn, we derive from \eqref{e:Tseng} that 
\begin{alignat}{2}
\label{e:tse1}
(\forall n\in\NN)\quad
 &2\gamma_n \pscal{\widetilde{\pp}_n-\xx}{ \UU_n\BB\xx_n-\UU_n\BB\widetilde{\pp}_n}_{\UU_{n}^{-1}}\notag\\
&= 2\pscal{\widetilde{\pp}_n-\xx}{\widetilde{\pp}_n
-\widetilde{\yy}_n -\gamma_n \UU_n \BB\widetilde{\pp}_n}_{\UU_{n}^{-1}}
\notag\\
&\quad +2 \pscal{\widetilde{\pp}_n-\xx}{\gamma_n \UU_n \BB\xx_n
 +\widetilde{\yy}_n-\widetilde{\pp}_n }_{\UU_{n}^{-1}}\notag\\ 
& \leq 2 \pscal{\widetilde{\pp}_n-\xx}{\gamma_n \UU_n \BB\xx_n 
+\widetilde{\yy}_n-\widetilde{\pp}_n }_{\UU_{n}^{-1}}\notag\\
&= 2 \pscal{\widetilde{\pp}_n-\xx}{\xx_n-\widetilde{\pp}_n }_{\UU_{n}^{-1}}\notag\\
& = |||\xx_n-\xx|||_{\UU^{-1}_n}^2
-|||\widetilde{\pp}_n-\xx|||_{\UU_{n}^{-1}}^2
-|||\xx_n-\widetilde{\pp}_n|||_{\UU_{n}^{-1}}^2.
\end{alignat}
Hence, using \eqref{e:Tseng}, 
\eqref{e:tse1}, the $\beta$-Lipschitz continuity of $\BB$, and 
\cite[Lemma~2.1(ii)]{Guad2012}, for every $n\in\NN$, we obtain 
\begin{alignat}{2}
\label{e:troicho}
|||\widetilde{\xx}_{n+1}-\xx|||_{\UU_{n}^{-1}}^2 &= 
|||\widetilde{\qq}_n + \xx_n -\widetilde{\yy}_n -\xx|||_{\UU_{n}^{-1}}^2\notag\\
& = |||(\widetilde{\pp}_n-\xx) + \gamma_n \UU_n(\BB\xx_n-\BB\widetilde{\pp}_n)|||_{\UU_{n}^{-1}}^2\notag\\
&= |||\widetilde{\pp}_n-\xx|||_{\UU_{n}^{-1}}^2 +2\gamma_n\pscal{\widetilde{\pp}_n-\xx}{\BB\xx_n-\BB\widetilde{\pp}_n}\notag\\
&\quad \hspace{4cm}+ \gamma_{n}^2 |||\UU_n (\BB\xx_n-\BB\widetilde{\pp}_n)|||_{\UU_{n}^{-1}}^2\notag\\
&\leq |||\xx_n-\xx|||_{\UU^{-1}_n}^2 -|||\xx_n-\widetilde{\pp}_n|||_{\UU_{n}^{-1}}^2 \notag\\
&\quad\hspace{4cm} +\gamma_{n}^2\mu\beta^2|||\xx_n-\widetilde{\pp}_n|||^2\notag\\  
&\leq |||\xx_n-\xx|||_{\UU^{-1}_n}^2 - \mu^{-1}|||\xx_n-\widetilde{\pp}_n|||^2 \notag\\
&\quad\hspace{4cm} +\gamma_{n}^2\mu\beta^2|||\xx_n-\widetilde{\pp}_n|||^2.
\end{alignat}
Hence,     
it follows from \eqref{e:tse} and \cite[Lemma~2.1(i)]{Guad2012}
that
\begin{alignat}{2}
\label{e:t21ab}
(\forall n\in\NN)\quad
 |||\widetilde{\xx}_{n+1}-\xx|||_{\UU_{n+1}^{-1}}^2 &\leq 
(1+\eta_n)|||\xx_n-\xx|||_{\UU^{-1}_n}^2\notag\\ 
&\quad-\mu^{-1}(1-\gamma^{2}_n \beta^2\mu^2)|||\xx_n-\widetilde{\pp}_n|||^2.
\end{alignat}
Consequently, 
\begin{equation}
\label{e:t21a}
(\forall n\in\NN)\quad
 |||\widetilde{\xx}_{n+1}-\xx|||_{\UU_{n+1}^{-1}} \leq (1+\eta_n)|||\xx_n-\xx|||_{\UU^{-1}_n}.
\end{equation}
For every $n\in\NN$, set 
\begin{equation} 
\varepsilon_n = \sqrt{\mu\alpha^{-1}}\Big(
2\big( |||\bb_n |||_{\UU_{n}^{-1}} + (\beta\mu)^{-1} |||\aaa_n |||_{\UU_{n}}\big)  
+ (\beta\mu)^{-1} |||\cc_n |||_{\UU_{n}} + (\beta\mu)^{-1}|||\aaa_n |||_{\UU_{n}}\Big) .
\end{equation}
Then $( \E[\varepsilon_n|\FF_n])_{n\in\NN}$ is summable \text{$\boldsymbol{\mathsf{P}}$-a.s.}  
by \eqref{e:absol} and we derive 
from \cite[Lemma~2.1(ii)(iii)]{Guad2012}, and \eqref{e:pabsl} that 
\begin{alignat}{2}
(\forall n\in\NN)\quad |||\ee_n|||_{\UU_{n+1}^{-1}}& 
= |||\widetilde{\xx}_{n+1}-\xx_{n+1}|||_{\UU_{n+1}^{-1}}\notag\\
&\leq \sqrt{\alpha^{-1}}|||\widetilde{\xx}_{n+1}-\xx_{n+1}|||\notag\\
&\leq \sqrt{\mu\alpha^{-1}}|||\widetilde{\xx}_{n+1}-\xx_{n+1}|||_{\UU_{n}^{-1}}\notag\\
&\leq  \sqrt{\mu\alpha^{-1}}
(|||\widetilde{\yy}_n-\yy_{n}|||_{\UU_{n}^{-1}} + |||\widetilde{\qq}_{n}-\qq_{n}|||_{\UU_{n}^{-1}})\notag\\
&\leq \varepsilon_n.
\end{alignat}
In turn, we derive from \eqref{e:t21a} that  
\begin{alignat}{2}
\label{e:oac}
(\forall n\in\NN)\quad
|||\xx_{n+1}-\xx|||_{\UU_{n+1}^{-1}} &\leq 
|||\widetilde{\xx}_{n+1}-\xx|||_{\UU_{n+1}^{-1}}+  
|||\widetilde{\xx}_{n+1}-\xx_{n+1}|||_{\UU_{n+1}^{-1}}\notag\\
&\leq|||\widetilde{\xx}_{n+1}-\xx|||_{\UU_{n+1}^{-1}}+ \varepsilon_n\notag\\
&\leq (1+\eta_n)|||\xx_n-\xx|||_{\UU^{-1}_n}+\varepsilon_n.
\end{alignat}
By assumption,  since $\E[\|\xx_0\|^2] $ is finite, by induction, for every $n\in\NN$, $\E[\|\xx_n\|^2]$ is finite 
 and hence $\E[\|\xx_n\|]$ and $\E[\|\xx_n\|_{\UU^{-1}_n}]$ are finite too. 
By taking the conditional expectation with respect to $\FF_n$ and note that $ |||\xx_n-\xx|||_{\UU^{-1}_n}$ is $\FF_n$-measurable,
we obtain
\begin{equation}
(\forall n\in\NN)\quad
\E[|||\xx_{n+1}-\xx|||_{\UU_{n+1}^{-1}}|\FF_n]  \leq (1+\eta_n)|||\xx_n-\xx|||_{\UU^{-1}_n}
+ \E[\varepsilon_n|\FF_n].
\end{equation}
This shows that $(\xx_n)_{n\in\NN}$ is $|\cdot|$--quasi-Fej\'er monotone with respect to the target set
$\zer(\AAA+\BB)$ relative to $(\UU^{-1}_n)_{n\in\NN}$. Moreover, 
 $(|||\xx_n-\xx|||_{\UU^{-1}_n})_{n\in\NN}$ is bounded. 
In turn, since $\BB$ and $(J_{\gamma_n \UU_n\AAA})_{n\in\NN}$
are Lipschitzian, and 
$(\forall n\in\NN)\; \xx = J_{\gamma_n\UU_n\AAA}(\xx -\gamma_n\UU_n\BB\xx)$,
we deduce from \eqref{e:Tseng} that 
$(\widetilde{\yy}_n)_{n\in\NN},(\widetilde{\pp}_{n})_{n\in\NN}$, 
and $(\widetilde{\qq}_{n})_{n\in\NN}$ are bounded. 
Therefore, 
\begin{equation}
\label{e:2a}
 \tau = \sup_{n\in\NN}\{|||\xx_n -\widetilde{\yy}_n + \widetilde{\qq}_n -\xx |||_{\UU_{n}^{-1}},
|||\xx_n-\xx|||_{\UU^{-1}_n}\}
 < +\infty\quad \text{$\boldsymbol{\mathsf{P}}$-a.s.}
\end{equation}
Hence, using \eqref{e:Tseng}, Cauchy-Schwarz for the norms
$(|||\cdot|||_{\UU_{n}^{-1}})_{n\in\NN}$, and \eqref{e:troicho}, 
we get, for every $n\in\NN$,
\begin{alignat}{2}
|||\xx_{n+1}-\xx |||_{\UU_{n}^{-1}}^2 
&= |||\xx_n -\yy_n +\qq_n -\xx |||_{\UU_{n}^{-1}}^2\notag\\
&= |||\widetilde{\qq}_n+ \xx_n -\widetilde{\yy}_n -\xx + \dd_n |||_{\UU_{n}^{-1}}^2\notag\\
&\leq |||\widetilde{\qq}_n+ \xx_n -\widetilde{\yy}_n -\xx|||_{\UU^{-1}_n}^2 
+ 2|||\widetilde{\qq}_n+ \xx_n -\widetilde{\yy}_n -\xx |||_{\UU_{n}^{-1}} |||\dd_n|||_{\UU_{n}^{-1}}\notag\\
 &\quad+ |||\dd_n|||_{\UU_{n}^{-1}}^2\notag\\
&\leq |||\xx_n-\xx|||_{\UU^{-1}_n}^2 -
\mu^{-1}(1-\gamma^{2}_n \beta^2\mu^2)|||\xx_n-\widetilde{\pp}_n|||^2 + \varepsilon_{1,n},
\end{alignat}
where $(\forall n\in\NN)\; \varepsilon_{1,n} = 
 2 ||| \widetilde{\qq}_n+ \xx_n -\widetilde{\yy}_n -\xx|||_{\UU_{n}^{-1}} |||\dd_n|||_{\UU_{n}^{-1}} + |||\dd_n|||_{\UU_{n}^{-1}}^2$.
In turn, for every $n\in\NN$, by \eqref{e:tse} and \cite[Lemma 2.1(i)]{Guad2012},
\begin{alignat}{2}
|||\xx_{n+1}-\xx |||_{\UU_{n+1}^{-1}}^2 &\leq (1+\eta_n)|||\xx_{n+1}-\xx |||_{\UU_{n}^{-1}}^2 \notag\\
&\leq(1+\eta_n) |||\xx_n-\xx|||_{\UU^{-1}_n}^2 -
\mu^{-1}(1-\gamma^{2}_n \beta^2\mu^2)|||\xx_n-\widetilde{\pp}_n|||^2\notag \\
&\quad+ (1+\eta_n)\varepsilon_{1,n}.
\end{alignat}
 Since, $J_{\gamma_n\AAA}\circ(\Id -\gamma_n \BB)$ is continuous, 
$\widetilde{\pp}_n$ is $\FF_n$-measurable. 
In turn, for every $n\in\NN$, 
\begin{alignat}{2}
\E[|||\xx_{n+1}-\xx |||_{\UU_{n+1}^{-1}}^2|\FF_n] 
&\leq(1+\eta_n) |||\xx_n-\xx|||_{\UU^{-1}_n}^2 -
\mu^{-1}(1-\gamma^{2}_n \beta^2\mu^2)|||\xx_n-\widetilde{\pp}_n|||^2\notag \\
&\quad+ \E[(1+\eta_n)\varepsilon_{1,n}|\FF_n] .
\end{alignat}
Let us prove that
\begin{equation}
 \label{e2}
 \sum_{n\in\NN}\E[ \varepsilon_{1,n} | \FF_n] < +\infty 
 \quad \text{$\boldsymbol{\mathsf{P}}$-a.s.}
 \end{equation}
 Indeed, since 
 $\Id -\gamma_n \BB$  is continuous, $\widetilde{\yy}_n$ and $\widetilde{\qq}_n$
 are $\FF_n$-measurable. Therefore, $|||\xx_n -\widetilde{\yy}_n + \widetilde{\qq}_n -\xx |||_{\UU_{n}^{-1}}$ is $\FF_n$-measurable 
 and hence by \eqref{e:pabsl} and \eqref{e:2a}, we obtain
 \begin{alignat}{2}
 \sum_{n\in\NN}\E[|||\xx_n -\widetilde{\yy}_n + \widetilde{\qq}_n -\xx |||_{\UU_{n}^{-1}} |||\dd_n |||_{\UU_{n}^{-1}} |\FF_n] 
 &=  \sum_{n\in\NN}|||\xx_n -\widetilde{\yy}_n + \widetilde{\qq}_n -\xx |||_{\UU_{n}^{-1}} \E[|||\dd_n |||_{\UU_{n}^{-1}} |\FF_n]\notag \\
 &\leq \tau  \sum_{n\in\NN} \E[|||\dd_n |||_{\UU_{n}^{-1}} |\FF_n]\notag \\
 &< +\infty\quad \text{$\boldsymbol{\mathsf{P}}$-a.s.},
 \end{alignat}
 which and \eqref{e:2c} prove \eqref{e2}.
It follows from Lemma 2.1 that
\begin{equation}
\label{e:emyeu}
 \sum_{n\in\NN} |||\xx_n-\widetilde{\pp}_n|||^2 < +\infty\quad \text{$\boldsymbol{\mathsf{P}}$-a.s.}
\end{equation}
(i): 
It follows from \eqref{e:emyeu} and \eqref{e:pabsl} that
\begin{equation}
 \sum_{n\in\NN}\E[|||\xx_n-\pp_n|||^2|\FF_n]   
\leq 2\sum_{n\in\NN} |||\xx_n-\widetilde{\pp}_n|||^2 
+ 2\sum_{n\in\NN}\E[|||\pp_n-\widetilde{\pp}_n |||^2|\FF_n ] < +\infty \quad\text{$\boldsymbol{\mathsf{P}}$-a.s.}
\end{equation}
Furthermore, we derive from \eqref{e:pabsl}, \eqref{e:absol} and \eqref{e:2c} that 
\begin{alignat}{2}
 \sum_{n\in\NN}\E[|||\yy_n -\qq_n |||^2]  &= \sum_{n\in\NN}
 \E[|||\widetilde{\qq}_n-\widetilde{\yy}_n  +\dd_n |||^2|\FF_n] \notag\\
&= \sum_{n\in\NN}\E[|||\widetilde{\pp}_n -\xx_n 
+\gamma_n\UU_n(\BB\xx_n -\BB\widetilde{\pp}_n) +\dd_n |||^2|\FF_n]  \notag \\
&\leq 3\Big( \sum_{n\in\NN} |||\xx_n -\widetilde{\pp}_n|||^2 
+\E[|||\gamma_n\UU_n(\BB\xx_n -\BB\widetilde{\pp}_n) |||^2
 + |||\dd_n |||^2|\FF_n] \Big)\notag \\
&< +\infty\quad \text{$\boldsymbol{\mathsf{P}}$-a.s.}
\end{alignat}

\ref{Tsengiia}: Let $\boldsymbol{\Omega}_0$ be the set of all $\omega \in \boldsymbol{\Omega}$ 
such that $(\xx_n(\omega))_{n\in\NN}$ is bounded and \eqref{e:emyeu} is satisfied. We have 
$\boldsymbol{\mathsf{P}}(\boldsymbol{\Omega}_0) =1$. Fix $\omega \in \boldsymbol{\Omega}_0$.
Let $\xx(\omega)$ be a weak cluster point of $(\xx_n(\omega))_{n\in\NN}$. 
Then there exists a subsequence $(\xx_{k_n}(\omega))_{n\in\NN}$ that converges weakly to
$\xx(\omega)$. Therefore $\widetilde{\pp}_{k_n}(\omega)\weakly \xx(\omega)$ by \eqref{e:emyeu} and by 
the definition of  $\boldsymbol{\Omega}_0$.
Furthermore, it follows from \eqref{e:Tseng} that $\uu_{k_n}(\omega)\to 0$. 
Hence, since $(\forall n\in\NN )\;(\widetilde{\pp}_{k_n}(\omega),\uu_{k_n}(\omega))\in\gra(\AAA+\BB)$, 
we obtain, $\xx(\omega)\in \zer(\AAA+\BB)$ \cite[Proposition 20.33(ii)]{livre1}. 
Altogether, it follows Proposition \ref{p:11}
that $\xx_n\weakly \overline{\xx}$ and hence that
$\widetilde{\pp}_n\weakly \overline{\xx}$.

Now, let $\boldsymbol{\Omega}_1$ be the set of all $\omega\in \boldsymbol{\Omega}$ such that 
$\xx_n(\omega)\weakly\overline{\xx}(\omega)$ and $\widetilde{\pp}_n(\omega) \weakly\overline{\xx}(\omega)$,
and $\widetilde{\pp}_n(\omega) - \xx_n(\omega)\to 0$.
 Then $\boldsymbol{\mathsf{P}}(\boldsymbol{\Omega}_1) =1$ and hence 
$\boldsymbol{\mathsf{P}}(\boldsymbol{\Omega}_1\cap\widetilde{\boldsymbol{\Omega}}) =1$. 

\ref{Tsengiiiba}: Fix $\omega\in \boldsymbol{\Omega}_1\cap\widetilde{\boldsymbol{\Omega}}$.
Then $\xx_n(\omega)\weakly\overline{\xx}(\omega)$ and $\widetilde{\pp}_n(\omega) \weakly\overline{\xx}(\omega)$. 
Furthermore, it follows from \eqref{e:Tseng} that $\uu_{n}(\omega)\to 0$. 
Hence, since $(\forall n\in\NN)\;
(\widetilde{\pp}_{n}(\omega),\uu_{n}(\omega))\in\gra(\AAA+\BB)$ 
and since $\AAA+\BB$ is demiregular at $\overline{\xx}(\omega)$ by our assumption, 
by \cite[Definition 2.3]{plc2010}, $\widetilde{\pp}_n(\omega)\to\overline{\xx}(\omega)$, 
 and therefore  $\xx_{n}(\omega)\to\overline{\xx}(\omega)$.

\ref{Tsengiiica}: Fix $\omega\in \boldsymbol{\Omega}_1\cap\widetilde{\boldsymbol{\Omega}}$.
If $\AAA$ or $\BB$ is uniformly monotone at 
$\overline{\xx}(\omega)$, then $\AAA +\BB$ is 
uniformly monotone at $\overline{\xx}(\omega)$. 
Therefore, the result follows from \cite[Proposition 2.4(i)]{plc2010}.
\end{proof}

\begin{corollary}
\label{l:Tsenglemma}
Let $\KKK$ be a real separable Hilbert space with the scalar product $\pscal{\cdot}{\cdot}$
and the associated norm $|||\cdot|||$.
Let $\beta$ be in $ \left]0,+\infty\right[$,
let $\AAA\colon\KKK\to 2^{\KKK}$ be maximally monotone,
let $\BB\colon\KKK\to \KKK$ be a monotone and $\beta$-Lipschitzian operator on
$\KKK$ such that $\zer(\AAA+\BB) \not= \emp$.
Let $(\aaa_n)_{n\in\NN}$, $(\bb_n)_{n\in\NN}$, and $(\cc_n)_{n\in\NN}$ 
be  sequences of square integrable $\KKK$-valued random vectors.
Let $\xx_0$ be a square integrable $\KKK$-valued random vector , let $\varepsilon \in \left]0, 1/(\beta+1)\right[$,
let $(\gamma_n)_{n\in\NN}$ be a sequence 
in $\left[\varepsilon, (1-\varepsilon)/\beta\right]$, and set
\begin{equation}
\label{e:Tsenga}
(\forall n\in\NN)\quad
\begin{array}{l}
\left\lfloor
\begin{array}{l}
\yy_n= \xx_n-\gamma_n(\BB \xx_n+\aaa_n)\\[1mm]
\pp_n = J_{\gamma_n \AAA}\yy_n + \bb_n\\
\qq_n = \pp_n -\gamma_n(\BB\pp_n+\cc_n)\\
\xx_{n+1} = \xx_n -\yy_n +\qq_n.
\end{array}
\right.\\[2mm]
\end{array}
\end{equation}
Suppose that the following conditions are satisfied with $\FF_n = \sigma(\xx_0,\ldots,\xx_n)$,  
\begin{equation}\label{eq:cond1}
\sum_{n\in\NN}\sqrt{\E[|||\aaa_n|||^2|\FF_n]} < \infty, \quad 
\sum_{n\in\NN}\sqrt{\E[|||\bb_n|||^2|\FF_n]} < \infty \quad 
\end{equation}
and 
\begin{equation}\label{eq:cond2}
\sum_{n\in\NN}\sqrt{\E[|||\cc_n|||^2|\FF_n]} < \infty \quad
\text{$\boldsymbol{\mathsf{P}}$-a.s.}
\end{equation}
Then the following hold for some  $\zer(\AAA+\BB)$-valued random vector $\overline{\xx}$.
\begin{enumerate}
 \item
\label{Tsengi} $\sum_{n\in\NN}\E[|||\xx_n-\pp_n|||^2|\FF_n] < \pinf$ and 
$\sum_{n\in\NN}\E[|||\yy_n-\qq_n|||^2|\FF_n]< \pinf$ $\boldsymbol{\mathsf{P}}$-a.s.
\item \label{Tsengii}
$\xx_n \weakly \overline{\xx}$ and 
$ J_{\gamma_n \AAA}(\xx_n-\gamma_n\BB\xx_n) \weakly \overline{\xx}$ $\boldsymbol{\mathsf{P}}$-a.s.
\item\label{Tsengiii} Suppose that one of the following is satisfied for some subset 
$\widetilde{\boldsymbol{\Omega}}\subset \boldsymbol{\Omega} $ with $\boldsymbol{\mathsf{P}}( \widetilde{\boldsymbol{\Omega}}) =1$.
\begin{enumerate}
 \item\label{Tsengiiib} $\AAA + \BB$ is demiregular 
(see \cite[Definition 2.3]{plc2010}) at $\overline{\xx}(\omega)$ for every $\omega\in\widetilde{\boldsymbol{\Omega}} $.
\item \label{Tsengiiic}$\AAA$ or $\BBB$ is uniformly monotone at $\overline{\xx}(\omega)$ 
for every $\omega\in\widetilde{\boldsymbol{\Omega}}$.
\end{enumerate}
Then $\xx_n\to\overline{\xx}$ and $J_{\gamma_n \AAA}(\xx_n-\gamma_n\BB\xx_n) \to\overline{\xx}$ $\boldsymbol{\mathsf{P}}$-a.s.
\end{enumerate}
\end{corollary}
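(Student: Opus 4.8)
The plan is to deduce Corollary \ref{l:Tsenglemma} from Theorem \ref{r:1} by specializing the variable-metric data. First I would set $\KKK$ to be the given Hilbert space (unchanged), and take $\UU_n = \Id$ for every $n\in\NN$ together with $\eta_n \equiv 0$, so that $(\eta_n)_{n\in\NN}\in\ell^1_+(\NN)$ trivially, $\mu = \sup_n\|\UU_n\| = 1 < +\infty$, and condition \eqref{e:tse} holds with $\alpha = 1$: indeed $(1+\eta_n)\pscal{\xx}{\UU_{n+1}\xx} = |||\xx|||^2 = \pscal{\xx}{\UU_n\xx} \geq 1\cdot|||\xx|||^2$. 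With these choices the iteration \eqref{e:Tsengaa} reduces verbatim to \eqref{e:Tsenga}, since $\gamma_n\UU_n(\BB\xx_n + \aaa_n) = \gamma_n(\BB\xx_n+\aaa_n)$, $J_{\gamma_n\UU_n\AAA} = J_{\gamma_n\AAA}$, and so on. The admissible stepsize range in Theorem \ref{r:1} is $\left[\varepsilon, (1-\varepsilon)/(\beta\mu)\right] = \left[\varepsilon,(1-\varepsilon)/\beta\right]$ and the admissible range for $\varepsilon$ is $\left]0,1/(\beta\mu+1)\right[ = \left]0,1/(\beta+1)\right[$, exactly matching the hypotheses of the corollary.

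Next I would check that the remaining hypotheses of Theorem \ref{r:1} are met. The operators $\AAA$ (maximally monotone) and $\BB$ (monotone, $\beta$-Lipschitzian) and the nonemptiness $\zer(\AAA+\BB)\neq\emp$ are assumed identically in the corollary. The random vectors $\xx_0$, $(\aaa_n)_{n\in\NN}$, $(\bb_n)_{n\in\NN}$, $(\cc_n)_{n\in\NN}$ are assumed square integrable in both statements, and the summability conditions \eqref{eq:cond1}--\eqref{eq:cond2} are precisely the hypotheses $\big(\sqrt{\E[|||\aaa_n|||^2|\FF_n]}\big)_{n\in\NN}$, $\big(\sqrt{\E[|||\bb_n|||^2|\FF_n]}\big)_{n\in\NN}$, $\big(\sqrt{\E[|||\cc_n|||^2|\FF_n]}\big)_{n\in\NN}$ being summable $\boldsymbol{\mathsf{P}}$-a.s. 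Since $\FF_n = \sigma(\xx_0,\ldots,\xx_n)$ in both places, every hypothesis of Theorem \ref{r:1} holds.

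Finally I would invoke Theorem \ref{r:1} to obtain the existence of a $\zer(\AAA+\BB)$-valued random vector $\xxx$ such that its conclusions \ref{Tsengia}, \ref{Tsengiia}, \ref{Tsengiii} hold. Conclusion \ref{Tsengia} of the theorem reads $\sum_{n\in\NN}\E[|||\xx_n-\pp_n|||^2|\FF_n]<\pinf$ and $\sum_{n\in\NN}\E[|||\yy_n-\qq_n|||^2|\FF_n]<\pinf$ $\boldsymbol{\mathsf{P}}$-a.s., which is exactly \ref{Tsengi} of the corollary. Conclusion \ref{Tsengiia}, with $\UU_n = \Id$, says $\xx_n\weakly\xxx$ and $J_{\gamma_n\AAA}(\xx_n-\gamma_n\BB\xx_n)\weakly\xxx$ $\boldsymbol{\mathsf{P}}$-a.s., which is \ref{Tsengii}. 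Conclusion \ref{Tsengiii} of the theorem, under the same demiregularity/uniform monotonicity alternatives \ref{Tsengiiiba}/\ref{Tsengiiica} appearing here as \ref{Tsengiiib}/\ref{Tsengiiic}, gives the strong convergence $\xx_n\to\xxx$ and $J_{\gamma_n\AAA}(\xx_n-\gamma_n\BB\xx_n)\to\xxx$ $\boldsymbol{\mathsf{P}}$-a.s. This completes the proof. There is essentially no obstacle here — the only thing requiring a moment's care is verifying that the constant-metric choice $\UU_n=\Id$, $\eta_n=0$ genuinely satisfies \eqref{e:tse} with $\mu=1$, $\alpha=1$, and that the stepsize and $\varepsilon$ ranges collapse correctly when $\mu=1$; once that is noted the corollary is an immediate specialization.

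\begin{proof}
Apply Theorem \ref{r:1} with the same Hilbert space $\KKK$, with $\UU_n = \Id$ for every $n\in\NN$, and with $\eta_n = 0$ for every $n\in\NN$. Then $(\eta_n)_{n\in\NN}\in\ell^1_+(\NN)$, $\mu = \sup_{n\in\NN}\|\UU_n\| = 1 < +\infty$, and, for every $\xx\in\KKK$,
\begin{equation}
(1+\eta_n)\pscal{\xx}{\UU_{n+1}\xx} = |||\xx|||^2 = \pscal{\xx}{\UU_n\xx} \geq |||\xx|||^2,
\end{equation}
so that \eqref{e:tse} holds with $\alpha = 1$. With these choices, $\beta\mu = \beta$, hence $\varepsilon\in\left]0,1/(\beta\mu+1)\right[ = \left]0,1/(\beta+1)\right[$ and $(\gamma_n)_{n\in\NN}$ lies in $\left[\varepsilon,(1-\varepsilon)/(\beta\mu)\right] = \left[\varepsilon,(1-\varepsilon)/\beta\right]$, and the recursion \eqref{e:Tsengaa} becomes \eqref{e:Tsenga}. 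The operators $\AAA$ and $\BB$, the assumption $\zer(\AAA+\BB)\neq\emp$, the square integrability of $\xx_0$, $(\aaa_n)_{n\in\NN}$, $(\bb_n)_{n\in\NN}$, $(\cc_n)_{n\in\NN}$, and the summability hypotheses \eqref{eq:cond1}--\eqref{eq:cond2} are exactly those required by Theorem \ref{r:1}, with $\FF_n = \sigma(\xx_0,\ldots,\xx_n)$. Since $|||\cdot|||_{\UU_n^{-1}} = |||\cdot|||_{\UU_n} = |||\cdot|||$ for every $n\in\NN$, assertions \ref{Tsengia}, \ref{Tsengiia}, and \ref{Tsengiii} of Theorem \ref{r:1} yield, respectively, assertions \ref{Tsengi}, \ref{Tsengii}, and \ref{Tsengiii} of the corollary for the $\zer(\AAA+\BB)$-valued random vector $\xxx$ provided by Theorem \ref{r:1}.
\end{proof}
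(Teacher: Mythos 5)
Your specialization is correct: taking $\UU_n=\Id$ and $\eta_n=0$ gives $\mu=\alpha=1$, makes \eqref{e:tse} hold trivially, collapses the stepsize and $\varepsilon$ ranges and the recursion \eqref{e:Tsengaa} to those of the corollary, and all norms $|||\cdot|||_{\UU_n^{\pm 1}}$ reduce to $|||\cdot|||$, so the three conclusions of Theorem \ref{r:1} become exactly \ref{Tsengi}--\ref{Tsengiii}. This is precisely the (implicit) argument the paper intends, since the corollary is stated as the constant-identity-metric instance of Theorem \ref{r:1}.
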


\begin{remark} Here are some remarks.
In the case when $\BB$ is a general multi-valued maximally monotone operator or  a cocoercive operator,
the almost sure convergence of the Douglas-Rachford or forward-backward are proved in \cite{plc14} under the same 
type of condition on the stochastic errors. Furthermore, in the case when $\BB$ is cocoercive and uniformly monotone, 
the almost sure convergence of the forward-backward splitting is also proved in \cite{LSB14} under different conditions 
on stepsize and stochastic errors. One of the early work concerns with Lipschitzian monotone operator was in \cite{Nem11}.
\end{remark}

\begin{example}
Let $\boldsymbol{f}\colon\KKK\to\left[-\infty,+\infty\right]$ 
be a proper lower semicontinuous convex function, let $\alpha\in\RPP$, let $\beta\in\RPP$, 
let $\BB\colon\KKK\to\KKK$ be a monotone and $\beta$-Lipschitzian operator.
Let $(\aaa_n)_{n\in\NN}$, $(\bb_n)_{n\in\NN}$, and $(\cc_n)_{n\in\NN}$ 
be  sequences of square integrable $\KKK$-valued random vectors such that 
\eqref{eq:cond1} and \eqref{eq:cond2} are satisfied.
 Furthermore, let $\xx_0$ be a square integrable $\KKK$-valued random vector,
let $\varepsilon\in\left]0,\min\{1,1/(\beta+1)\}\right[$,
let $(\gamma_n)_{n\in\NN}$ be a 
sequence in $[\varepsilon,(1-\varepsilon)/\beta]$.
Suppose that the 
variational inequality
\begin{equation}
\label{e:2012-11:10}
\text{find}\quad \xxx\in\KKK\quad\text{such that}\quad
(\forall \yy\in\KKK)\quad\scal{\xxx-\yy}{\BB\xxx}+ \boldsymbol{f}(\xxx)\leq \boldsymbol{f}(\yy)
\end{equation}
admits at least one solution and set
\begin{equation}
\label{e:Tsengavar}
(\forall n\in\NN)\quad
\begin{array}{l}
\left\lfloor
\begin{array}{l}
\yy_n= \xx_n-\gamma_n(\BB\xx_n+\aaa_n)\\[1mm]
\pp_n = \arg\underset{\xx\in\KKK}{\min}\big(\boldsymbol{f}(\xx)  
+ \frac{1}{2\gamma_n}|||\xx-\yy_n|||^2\big) +\bb_n\\
\qq_n = \pp_n -\gamma_n (\BB\pp_n+\cc_n)\\
\xx_{n+1} = \xx_n -\yy_n +\qq_n.
\end{array}
\right.\\[2mm]
\end{array}
\end{equation}
Then, for almost all $\omega\in \boldsymbol{\Omega}$, 
 $(\xx_n(\omega))_{n\in\NN}$ converges weakly to a solution 
$\xxx(\omega)$ to \eqref{e:2012-11:10}.
\end{example}
\begin{proof}
Set $\AAA=\partial \boldsymbol{f}$
in Corollary~\ref{l:Tsenglemma}\ref{Tsengii}.
\end{proof}
\begin{remark} 
Since $(\gamma_n)_{n\in\NN}$ is bounded away from $0$, we have 
\begin{equation}
\label{infff}
\sum_{n\in\NN}\gamma_n = +\infty 
\quad 
\text{and}\quad
\sum_{n\in\NN}\gamma^{2}_n = +\infty. 
\end{equation}
While, in the standard stochastic gradient method \cite{Polyak87}, we  often require 
\begin{equation}
\label{efffff}
\sum_{n\in\NN}\gamma_n = +\infty 
\quad 
\text{and}\quad
\sum_{n\in\NN}\gamma^{2}_n < +\infty. 
\end{equation}
 Under the condition \eqref{efffff},
 the conditions on the stochastic errors in the 
stochastic gradient method are  weaker than \eqref{eq:cond1}-\eqref{eq:cond2} 
(see also \cite[Assumption 2 and Eq (4) ]{Yousefian} for the case 
of the projected stochastic gradient method). 
\end{remark}
We end this section by noting that, 
 in the case when $\UU_n =\UU$, we obtain a  preconditioned version of \eqref{e:Tsenga}. 
Some other preconditioned algorithms can be found 
in \cite{JCP14,Xu08}. 

\section{Monotone inclusions involving Lipschitzian operators}
\label{s:app}
The applications of the forward-backward-forward splitting algorithm 
considered in \cite{siop2,plc6,Tseng00} can be extended to a stochastic 
setting using Theorem \ref{r:1}. As an illustration, we present a stochastic version 
of the algorithm proposed in \cite[Eq. (3.1)]{plc6}.
Recall that
the parallel sum of $A\colon\HH\to 2^{\HH}$ and $B\colon\HH\to 2^{\HH}$ is \cite{livre1} 
\begin{equation}
\label{e:parasum}
A\infconv B=(A^{-1}+ B^{-1})^{-1}.
\end{equation} 

\begin{problem}
\label{CCP}
Let $\HH$ be a real separable Hilbert space,
let $m$ be a strictly positive integer, let $z\in\HH$,
let $A\colon\HH\to 2^{\HH}$ be maximally monotone operator, 
let $C\colon\HH\to\HH$ be monotone and $\nu_0$-Lipschitzian for some
$\nu_0\in\left]0,+\infty\right[$.
For every $i\in\{1,\ldots, m\}$, let $\GG_i$ be a real separable Hilbert space, 
let $r_i \in \GG_i$, let $B_i\colon \GG_i\to2^{\GG_i}$ 
be maximally monotone operator, 
let $D_i\colon \GG_i\to2^{\GG_i}$ be monotone and
such that $D_{i}^{-1}$ is $\nu_i$-Lipschitzian for some
$\nu_i\in\left]0,+\infty\right[$,
and let $L_i\colon\HH \to\GG_i$ 
be a nonzero bounded linear operator. 
Suppose that
\begin{equation}
\label{e:fbfcond}
z\in\ran
\bigg(A+\sum_{i=1}^mL^{*}_i\big((B_i\;\vuo\; D_i)(L_i\cdot-r_i)\big) + C\bigg).
\end{equation}
The problem is to solve the primal inclusion
\begin{equation}\label{primal12:41}
z\in A\overline{x} +\sum_{i=1}^mL^{*}_i
\big((B_i\;\vuo\; D_i)(L_i\overline{x}-r_i)\big) + C\overline{x} ,
\end{equation}
and  the dual inclusion
\begin{equation}\label{dual12:41}
(\forall i\in\{1,\ldots,m\})\quad
r_i \in -L_i(A +C)^{-1}\bigg(z -\sum_{i = 1}^m L_{i}^*\overline{v}_i \bigg) 
 +B^{-1}_i \overline{v}_i+  D^{-1}_i\overline{v}_i .
\end{equation}
We denote by $\mathcal{P}$ and $\mathcal{D}$ be the set of solutions 
to \eqref{primal12:41} and \eqref{dual12:41}, respectively.
\end{problem}

As shown in \cite{plc6}, Problem \ref{CCP} covers 
a wide class of problems in nonlinear analysis and
convex  optimization problems.
However, the algorithm in \cite[Theorem 3.1]{plc6} is studied in the deterministic. 
The following result extends this result to a stochastic setting.

Let us define $\KKK=\HH\oplus\GG_1\oplus\cdots\oplus\GG_m$ the Hilbert direct 
sum of the Hilbert spaces $\HH$ and $(\GG_i)_{1\leq i\leq m}$,
the scalar product and the associated 
norm of $\KKK$ respectively defined by
\begin{equation}
\label{e:palawan-mai2008b-}
\pscal{\cdot}{\cdot}
\colon((x,\boldsymbol{v}),(y,\boldsymbol{w}))\mapsto
\scal{x}{y}+ \sum_{i=1}^m\scal{v_i}{w_i}
\;\text{and}\;|||\cdot|||\colon
(x,\boldsymbol{v})\mapsto\sqrt{\|x\|^2+\sum_{i=1}^m\|v_i\|^2},
\end{equation}
where $\vv = (v_1,\ldots,v_m)$ and $\ww = (w_1,\ldots,w_m)$ are generic elements in 
$\GG_1\oplus\cdots\oplus\GG_m$.

\begin{corollary}
\label{t:Sv2}
Let $(a_{1,n})_{n\in\NN}, (b_{1,n})_{n\in\NN}$, and $(c_{1,n})_{n\in\NN}$
be  sequences of square integrable $\HH$-valued random vectors, and for every $i\in\{1,\ldots,m\}$, 
let $(a_{2,i,n})_{n\in\NN}, (b_{2,i,n})_{n\in\NN}$, and $(c_{2,i,n})_{n\in\NN}$ 
be  sequences of square integrable $\GG_i$-valued random vectors. Furthermore, set 
\begin{equation}
 \beta = \max\{\nu_0,\nu_1,\ldots, \nu_m\} + \sqrt{\sum_{i=1}^m\|L_i\|^2},
\end{equation}
let $x_0$ be a square integrable $\HH$-valued random vector, 
and, for every $i\in\{1,\ldots,m\}$,
let $v_{i,0}$ be a square integrable $\GG_i$-valued random vector, 
let $\varepsilon \in \left]0, 1/(1+\beta)\right[$, let $(\gamma_n)_{n\in\NN}$ be a sequence in 
$[\varepsilon, (1-\varepsilon)/\beta]$. Set 
\begin{equation}
\label{e:sva2}
(\forall n\in\NN)\quad 
\begin{array}{l}
\left\lfloor
\begin{array}{l}
y_{1,n} = x_n - \gamma_n \big(Cx_n+ \sum_{i=1}^{m}L_{i}^*v_{i,n}+ a_{1,n}\big)\\
p_{1,n}=J_{\gamma_n A}(y_{1,n} + \gamma_nz) + b_{1,n}\\
\operatorname{for}\  i =1,\ldots, m\\
\left\lfloor
\begin{array}{l}
y_{2,i,n} = v_{i,n} + \gamma_n\big(L_ix_n - D_{i}^{-1}v_{i,n} + a_{2,i,n}\big)\\
p_{2,i,n} = J_{\gamma_nB_{i}^{-1}}(y_{2,i,n} -\gamma_nr_i) + b_{2,i,n}\\
q_{2,i,n} = p_{2,i,n} + \gamma_n\big(L_ip_{1,n} -D_{i}^{-1}p_{2,i,n} + c_{2,i,n}\big)\\
v_{i,n+1} = v_{i,n} - y_{2,i,n} + q_{2,i,n} 
\end{array}
\right.\\[2mm]
q_{1,n} = p_{1,n} -\gamma_n\big(Cp_{1,n} + \sum_{i=1}^m L^{*}_ip_{2,i,n} + c_{1,n}\big)\\
x_{n+1} = x_n - y_{1,n} + q_{1,n}.
\end{array}
\right.\\[2mm]
\end{array}
\end{equation}
Suppose that the following conditions hold 
for $\FF_n = \sigma( (x_k,( v_{i,k})_{1\leq i\leq m})_{0\leq k\leq n}$,
\begin{equation}\label{eq:coddd}
\begin{cases}
\sum_{n\in\NN}\sqrt{\E[||| (a_{1,n}, (a_{2,i,n})_{1\leq i\leq m})|||^2 | \FF_n]} < +\infty\\
\sum_{n\in\NN}\sqrt{\E[\| (b_{1,n}, (b_{2,i,n})_{1\leq i\leq m})|||^2 | \FF_n]} < +\infty\\
\sum_{n\in\NN}\sqrt{\E[||| (c_{1,n}, (c_{2,i,n})_{1\leq i\leq m})|||^2 | \FF_n]} < +\infty.
\end{cases}
\end{equation}
Then the following hold.
\begin{enumerate}
\item \label{t:Sv2i} 
$\sum_{n\in\NN}\E[\|x_n - p_{1,n}\|^2|\FF_n] < +\infty$ and 
$(\forall i\in \{1,\ldots,m\})\; \sum_{n\in\NN}\E[\|v_{i,n} - p_{2,i,n}\|^2|\FF_n] < +\infty$ $\boldsymbol{\mathsf{P}}$-a.s. 
 \item\label{t:Sv2ii}  
There exist a $\mathcal{P}$-valued random vector $\overline{x}$ and a $\mathcal{D}$-valued random vector  
$(\overline{v}_1,\ldots, \overline{v}_m)$  such that the following hold.
\begin{enumerate}
 \item \label{t:Sv2iia}
$x_n\weakly \overline{x}$ and 
$J_{\gamma_n A}(x_n - \gamma_n (Cx_n+ \sum_{i=1}^{m}L_{i}^*v_{i,n}) +\gamma_nz)\weakly \overline{x}$ $\boldsymbol{\mathsf{P}}$-a.s.
\item \label{t:Sv2iib}  $(\forall i\in \{1,\ldots,m\})\; v_{i,n} \weakly \overline{v}_i$ and 
$J_{\gamma_nB_{i}^{-1}}(v_{i,n} + \gamma_n\big(L_ix_n - D_{i}^{-1}v_{i,n})-\gamma_nr_i)  \weakly \overline{v}_i$ $\boldsymbol{\mathsf{P}}$-a.s.
\item \label{t:Sv2iic} Suppose that $A$ or $C$ is uniformly monotone at $\overline{x}(\omega)$
for every $\omega\in \widetilde{\boldsymbol{\Omega}}\subset \boldsymbol{\Omega}$ with 
$\boldsymbol{\mathsf{P}}(\widetilde{\boldsymbol{\Omega}}) =1$, then 
$x_n\to\overline{x}$ and 
$ J_{\gamma_n A}(x_n - \gamma_n (Cx_n+ \sum_{i=1}^{m}L_{i}^*v_{i,n}) +\gamma_nz)\to\overline{x}$ $\boldsymbol{\mathsf{P}}$-a.s.
\item \label{t:Sv2iid} 
Suppose that $B^{-1}_j$ or $D_{j}^{-1}$ is uniformly monotone at $\overline{v}_j(\omega)$
for every $\omega\in \widetilde{\boldsymbol{\Omega}}\subset \boldsymbol{\Omega}$ with 
$\boldsymbol{\mathsf{P}}(\widetilde{\boldsymbol{\Omega}}) =1$, 
for some $j\in\{1,\ldots,m\}$, then 
$v_{j,n}\to\overline{v}_j$ and $J_{\gamma_nB_{j}^{-1}}(v_{j,n} 
+ \gamma_n\big(L_jx_n - D_{j}^{-1}v_{j,n})-\gamma_nr_j)  \to\overline{v}_j$ $\boldsymbol{\mathsf{P}}$-a.s.
\end{enumerate}
\end{enumerate}
\end{corollary}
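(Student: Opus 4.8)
The plan is to recast Problem~\ref{CCP} as a single monotone inclusion in the Hilbert direct sum $\KKK=\HH\oplus\GG_1\oplus\cdots\oplus\GG_m$ and then apply Corollary~\ref{l:Tsenglemma}. Write a generic element of $\KKK$ as $(x,\vv)$ with $\vv=(v_1,\dots,v_m)$, set $\xx_n=(x_n,(v_{i,n})_{1\le i\le m})$, and define $\AAA\colon\KKK\to 2^{\KKK}\colon(x,\vv)\mapsto(Ax-z)\times(B_1^{-1}v_1+r_1)\times\cdots\times(B_m^{-1}v_m+r_m)$ together with $\BB\colon\KKK\to\KKK\colon(x,\vv)\mapsto\big(Cx+\sum_{i=1}^mL_i^*v_i,\,(D_i^{-1}v_i-L_ix)_{1\le i\le m}\big)$. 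Then $\AAA$ is maximally monotone, being a direct sum of the maximally monotone operators $A-z$ and $B_i^{-1}+r_i$. Writing $\BB=\SSS+\QQ$ with $\SSS\colon(x,\vv)\mapsto(\sum_iL_i^*v_i,(-L_ix)_i)$ and $\QQ\colon(x,\vv)\mapsto(Cx,(D_i^{-1}v_i)_i)$, the operator $\SSS$ is bounded, linear, and skew-adjoint, hence monotone and $\sqrt{\sum_i\|L_i\|^2}$-Lipschitzian, while $\QQ$ is monotone and $\max\{\nu_0,\dots,\nu_m\}$-Lipschitzian; therefore $\BB$ is monotone and $\beta$-Lipschitzian with the $\beta$ of the statement. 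Finally, \eqref{e:fbfcond} together with the Kuhn--Tucker analysis of \cite{plc6} gives $\zer(\AAA+\BB)\neq\emp$, and every $\overline{\xx}=(\overline x,(\overline v_i)_{1\le i\le m})\in\zer(\AAA+\BB)$ satisfies $\overline x\in\mathcal P$ and $(\overline v_i)_{1\le i\le m}\in\mathcal D$.

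Next I would check that \eqref{e:sva2} is exactly the recursion \eqref{e:Tsenga} run in $\KKK$ for these $\AAA$ and $\BB$, with stepsizes $(\gamma_n)_{n\in\NN}$, initial vector $\xx_0$, and errors $\aaa_n=(a_{1,n},(-a_{2,i,n})_i)$, $\bb_n=(b_{1,n},(b_{2,i,n})_i)$, $\cc_n=(c_{1,n},(-c_{2,i,n})_i)$. Two points need verification: first, since $\AAA$ is block diagonal with the listed output translations, $J_{\gamma_n\AAA}(y,\ww)=\big(J_{\gamma_nA}(y+\gamma_nz),(J_{\gamma_nB_i^{-1}}(w_i-\gamma_nr_i))_i\big)$, which turns the resolvent step of \eqref{e:Tsenga} into the $p_{1,n}$ and $p_{2,i,n}$ lines of \eqref{e:sva2}; second, substituting $\xx_n$, respectively $\pp_n=(p_{1,n},(p_{2,i,n})_i)$, into $\BB$ reproduces $y_{1,n},y_{2,i,n}$, respectively $q_{1,n},q_{2,i,n}$ (the sign flips on the dual errors come from the $+\gamma_n(\cdots)$ in the dual updates). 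The filtration $\sigma(\xx_0,\dots,\xx_n)$ coincides with the one in the statement, the hypotheses on $\varepsilon$ and on $(\gamma_n)_{n\in\NN}$ are those of Corollary~\ref{l:Tsenglemma}, and \eqref{eq:coddd} is \eqref{eq:cond1}--\eqref{eq:cond2} for $\aaa_n,\bb_n,\cc_n$ (signs being irrelevant to norms). Then Corollary~\ref{l:Tsenglemma}\ref{Tsengi} yields $\sum_n\E[|||\xx_n-\pp_n|||^2|\FF_n]<\pinf$ a.s., and since $|||\xx_n-\pp_n|||^2=\|x_n-p_{1,n}\|^2+\sum_i\|v_{i,n}-p_{2,i,n}\|^2$ this is \ref{t:Sv2i}. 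Corollary~\ref{l:Tsenglemma}\ref{Tsengii} gives $\xx_n\weakly\overline{\xx}$ for a $\zer(\AAA+\BB)$-valued $\overline{\xx}=(\overline x,(\overline v_i)_i)$; reading off coordinates (weak convergence in a direct sum is coordinatewise) yields \ref{t:Sv2iia}--\ref{t:Sv2iib}, the resolvent statements coming from decomposing the blocks of $J_{\gamma_n\AAA}(\xx_n-\gamma_n\BB\xx_n)\weakly\overline{\xx}$.

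For \ref{t:Sv2iic}--\ref{t:Sv2iid} one cannot simply quote Corollary~\ref{l:Tsenglemma}\ref{Tsengiii}, because uniform monotonicity of just one of $A$, $C$, $B_j^{-1}$, $D_j^{-1}$ at a point does not render $\AAA$ or $\BB$ uniformly monotone on all of $\KKK$. Instead I would reuse the facts established inside the proof of Theorem~\ref{r:1}: on an event of probability $1$ one has $\xx_n-\widetilde{\pp}_n\to 0$ (from \eqref{e:emyeu}), $\widetilde{\pp}_n\weakly\overline{\xx}$ with $(\widetilde{\pp}_n)_{n\in\NN}$ bounded, and $\uu_n=\gamma_n^{-1}(\widetilde{\yy}_n-\widetilde{\pp}_n)+\BB\widetilde{\pp}_n\to 0$ with $\gamma_n^{-1}(\widetilde{\yy}_n-\widetilde{\pp}_n)\in\AAA\widetilde{\pp}_n$, while $0\in\AAA\overline{\xx}+\BB\overline{\xx}$. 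Writing $\widetilde{\pp}_n=(\widetilde p_{1,n},(\widetilde p_{2,i,n})_i)$ and expanding $\pscal{\widetilde{\pp}_n-\overline{\xx}}{\uu_n}$, the contribution of $\SSS$ to $\pscal{\widetilde{\pp}_n-\overline{\xx}}{\BB\widetilde{\pp}_n-\BB\overline{\xx}}$ vanishes since $\SSS$ is linear and skew-adjoint, so $\pscal{\widetilde{\pp}_n-\overline{\xx}}{\uu_n}$ equals a sum of $2(m+1)$ nonnegative quantities: two at the primal coordinate (one from the monotonicity of $A$ between $\widetilde p_{1,n}$ and $\overline x$, one from that of $C$) and, for each $i$, one from the monotonicity of $B_i^{-1}$ and one from that of $D_i^{-1}$ between $\widetilde p_{2,i,n}$ and $\overline v_i$. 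Since $(\widetilde{\pp}_n)_{n\in\NN}$ is bounded and $\uu_n\to 0$, the whole sum tends to $0$, hence each summand does. Consequently, if $A$ or $C$ is uniformly monotone at $\overline x$, the corresponding summand minorizes $\phi(\|\widetilde p_{1,n}-\overline x\|)$ for an increasing $\phi$ vanishing only at $0$, forcing $\widetilde p_{1,n}\to\overline x$, hence $x_n\to\overline x$; and since $\widetilde p_{1,n}=J_{\gamma_nA}(x_n-\gamma_n(Cx_n+\sum_iL_i^*v_{i,n})+\gamma_nz)$ this is exactly \ref{t:Sv2iic}. The argument for \ref{t:Sv2iid} is identical, giving $\widetilde p_{2,j,n}\to\overline v_j$, hence $v_{j,n}\to\overline v_j$, and $J_{\gamma_nB_j^{-1}}(v_{j,n}+\gamma_n(L_jx_n-D_j^{-1}v_{j,n})-\gamma_nr_j)=\widetilde p_{2,j,n}\to\overline v_j$.

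The main obstacle is the bookkeeping, in two spots. Matching \eqref{e:sva2} to \eqref{e:Tsenga} requires tracking the sign conventions on the dual variables and the placement of $z$ and the $r_i$ (inside $\AAA$ versus inside the updates) so that the block resolvent and the block forward step come out exactly as written; and establishing the per-coordinate strong convergence in \ref{t:Sv2iic}--\ref{t:Sv2iid} forces one to open up the proof of Theorem~\ref{r:1} rather than invoke it as a black box, the crucial point there being that the skew part $\SSS$ of $\BB$ contributes nothing to $\pscal{\widetilde{\pp}_n-\overline{\xx}}{\uu_n}$, which is what decouples the primal and dual monotonicity contributions.
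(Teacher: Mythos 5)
Your proposal is correct and follows essentially the same route as the paper: reformulate Problem~\ref{CCP} as $\zer(\AAA+\BB)$ in $\KKK$ with exactly the block operators you define, verify that \eqref{e:sva2} is \eqref{e:Tsenga} in disguise (the paper omits your sign flips on the dual errors, which as you note are immaterial since only norms enter the hypotheses), and invoke Corollary~\ref{l:Tsenglemma} for \ref{t:Sv2i}--\ref{t:Sv2iib}. For \ref{t:Sv2iic}--\ref{t:Sv2iid} the paper likewise does not quote Corollary~\ref{l:Tsenglemma}\ref{Tsengiii} but expands $\pscal{\widetilde{\pp}_n-\overline{\xx}}{\uu_n}$ coordinatewise, exploiting the cancellation of the skew part and the nonnegativity of each monotonicity term --- precisely your argument, just written out with the explicit cross terms in \eqref{cz1}--\eqref{cz3e}.
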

\begin{proof}
 Set
\begin{equation}
 \label{e:maximal1}
\begin{cases}
\AAA\colon\KKK\to 2^{\KKK}\colon
(x,v_1,\ldots,v_m)\mapsto (-z+ Ax)
\times(r_1 + B_{1}^{-1}v_1)\times\ldots\times(r_m + B^{-1}_{m}v_m)\\
\BB\colon\KKK\to \KKK\colon
(x,v_1,\ldots,v_m)\mapsto
\bigg(Cx+\sum_{i=1}^mL_{i}^*v_i,D_{1}^{-1}v_1-L_1x,\ldots,D_{m}^{-1}v_m-L_mx\bigg).
\end{cases}
\end{equation}
Since $\AAA$ is maximally monotone \cite[Propositions 20.22 and 20.23]{livre1},
$\BB$ is monotone and $\beta$-Lipschitzian \cite[Eq. (3.10)]{plc6} 
with $\dom\BB=\KKK$, $\AAA+\BB$ is maximally monotone~\cite[Corollary~24.24(i)]{livre1}. 
In addition, \cite[Propositions 23.15(ii) and 23.16]{livre1} yield
$(\forall \gamma \in \left]0,\pinf\right[)(\forall n\in\NN)(\forall (x,v_1,\ldots, v_m) \in\KKK)$
\begin{alignat}{2}
\label{e:coco} 
J_{\gamma  \AAA}(x,v_1,\ldots,v_m) = \Big(J_{\gamma  A }(x+\gamma z), 
\big(J_{\gamma B^{-1}_i}(v_i-\gamma r_i)\big)_{1\leq i\leq m}\Big).
\end{alignat}
It is shown in~\cite[Eq.~(3.12)]{plc6} and \cite[Eq.~(3.13)]{plc6} 
that under the condition~\eqref{e:fbfcond},
$\zer(\AAA + \BB )\neq\emp$. Moreover, \cite[\rm Eq.~(3.21)]{plc6} and 
~\cite[\rm Eq.~(3.22)]{plc6} yield
\begin{equation}
(\overline{x},\overline{v}_1,\ldots, \overline{v}_m) 
\in\zer(\AAA+ \BB)\Rightarrow \overline{x}\;\text{solves \eqref{primal12:41}}\;
\text{and}\;
(\overline{v}_1,\ldots, \overline{v}_m)\; \text{solves \eqref{dual12:41}}.
\end{equation}
Let us next set, for every $n\in\NN$, 
\begin{equation}
\label{e:e:cucucu}
 \begin{cases}
 \xx_n = (x_n,v_{1,n},\ldots,v_{m,n})\\
\yy_n = (y_{1,n},y_{2,1,n},\ldots, y_{2,m,n})\\
\pp_n = (p_{1,n},p_{2,1,n},\ldots, p_{2,m,n})\\
\qq_n = (q_{1,n},q_{2,1,n},\ldots, q_{2,m,n})
 \end{cases}
\quad \text{and}\quad 
\begin{cases}
 \aaa_n =  (a_{1,n},a_{2,1,n},\ldots, a_{2,m,n})\\
\bb_n = (b_{1,n},b_{2,1,n},\ldots, b_{2,m,n})\\
\cc_n = (c_{1,n},c_{2,1,n},\ldots, c_{2,m,n}).
\end{cases}
\end{equation}
Then our assumptions imply that 
\begin{equation}
\label{e:ds1a}
 \sum_{n\in\NN}\sqrt{\E[|||\aaa_n|||^2 |\FF_n ]}< \infty,\quad\sum_{n\in\NN}\sqrt{\E[|||\bb_n|||^2 |\FF_n ]}< \infty ,\quad 
\text{and}\quad \sum_{n\in\NN}\sqrt{\E[|||\cc_n|||^2 |\FF_n ]}< \infty.
\end{equation}
Furthermore, it follows from the definition of $\BB$, \eqref{e:coco}, and  \eqref{e:e:cucucu} 
that \eqref{e:sva2} can be rewritten in $\KKK$ as 
 \begin{equation}
\label{e:Tsengb}
(\forall n\in\NN)\quad
\begin{array}{l}
\left\lfloor
\begin{array}{l}
\yy_n= \xx_n-\gamma_n(\BB\xx_n+\aaa_n)\\[1mm]
\pp_n = J_{\gamma_n \AAA}\yy_n + \bb_n\\
\qq_n = \pp_n -\gamma_n(\BB\pp_n+\cc_n)\\
\xx_{n+1} = \xx_n -\yy_n +\qq_n,
\end{array}
\right.\\[2mm]
\end{array}
\end{equation}
which is \eqref{e:Tsenga}. Moreover,  
every specific conditions in Corollary \ref{l:Tsenglemma} are satisfied.

\ref{t:Sv2i}: By Corollary \ref{l:Tsenglemma}\ref{Tsengi}, 
$\sum_{n\in\NN}\E[|||\xx_n-\pp_{n} |||^2|\FF_n ]< \infty$.

\ref{t:Sv2ii}(a)\&\ref{t:Sv2iib}: It follows from Corollary \ref{l:Tsenglemma}\ref{Tsengii} that 
\begin{equation}
x_n\weakly \overline{x}\quad 
\text{and}\quad (\forall i\in\{1,\ldots,m\})\quad v_{i,n}\weakly\overline{v}_i\quad \boldsymbol{\mathsf{P}}-a.s.
\end{equation}
Corollary \ref{l:Tsenglemma}\ref{Tsengii} shows that 
$(\overline{x},\overline{v}_1, \ldots, \overline{v}_m)\in\zer(\AAA+\BB)$. Hence, it follows
from~\cite[Eq~(3.19)]{plc6} that
 $(\overline{x},\overline{v}_1, \ldots, \overline{v}_m)$ satisfies the inclusions
\begin{equation}
\label{e:dualsol}
\begin{cases}
-\sum_{i = 1}^m L_{i}^*\overline{v}_{i} - C\overline{x}\in -z+A\overline{x} \\
(\forall i\in\{1,\ldots,m\})\;L_i\overline{x}- D^{-1}_i\overline{v}_{i}\in r_i+ B^{-1}_i\overline{v}_{i}.
\end{cases}
\end{equation}
For every $n\in\NN$ and every $i\in\{1,\ldots,m\}$, set
\begin{equation}
\label{e:sva21}
 \begin{cases}
  \widetilde{y}_{1,n} = x_n - \gamma_n \big(Cx_n+ \sum_{i=1}^{m}L_{i}^*v_{i,n}\big)\\
\widetilde{p}_{1,n}=J_{\gamma_n A}(\widetilde{y}_{1,n} + \gamma_nz)\\
 \end{cases}
\quad \text{and}\quad
\begin{cases}
\widetilde{y}_{2,i,n} = v_{i,n} + \gamma_n\big(L_ix_n - D_{i}^{-1}v_{i,n}\big)\\
\widetilde{p}_{2,i,n} = J_{\gamma_n B_{i}^{-1}}(\widetilde{y}_{2,i,n} -\gamma_nr_i).\\
\end{cases}
\end{equation}
We note that \eqref{e:ds1a} implies that 
\begin{equation} 
\label{e:alm}
\E[|||\aaa_n|||^2 |\FF_n ]\to 0, \quad  \E[|||\bb_n|||^2 |\FF_n ]\to 0 \quad \text{and}\quad \E[|||\cc_n|||^2 |\FF_n ]\to 0
\quad\text{$\boldsymbol{\mathsf{P}}$-a.s.} 
\end{equation}
Then, using \cite[Corollary 23.10]{livre1}, we get
\begin{alignat}{2}
\begin{cases}
\| \widetilde{p}_{1,n} - p_{1,n}\| \leq \|b_{1,n}\| + \beta^{-1}\|a_{1,n}\|, \\
 (\forall i\in \{1,\ldots,m\})
\quad\| \widetilde{p}_{2,i,n}-p_{2,i,n}\|\leq \|b_{2,i,n}\| + \beta^{-1}\|a_{2,i,n}\|, 
\end{cases}
\end{alignat} 
which and \eqref{e:alm} imply that 
\begin{alignat}{2}
\begin{cases}
\E[\| \widetilde{p}_{1,n} - p_{1,n}\|^2|\FF_n] \leq 2 \E[\|b_{1,n}\|^2 + \beta^{-2}\|a_{1,n}\|^2|\FF_n] \to 0
\;\text{$\boldsymbol{\mathsf{P}}$-a.s.} \\
 (\forall i\in \{1,\ldots,m\})
\;\E[ \| \widetilde{p}_{2,i,n}-p_{2,i,n}\|^2|\FF_n] \leq2\E[\|b_{2,i,n}\|^2 + \beta^{-2}\|a_{2,i,n}\|^2|\FF_n]\to 0\;
\text{$\boldsymbol{\mathsf{P}}$-a.s.}
\end{cases}
\end{alignat} 
Since $(x,v_1\ldots,v_m)\mapsto J_{\gamma_n A}(x - \gamma_n \big(Cx+ \sum_{i=1}^{m}L_{i}^*v_{i}) +\gamma_nz)$
is continuous from $\KKK\to \HH$, $\widetilde{p}_{1,n}$ is $\FF_n$-measurable. By the same way,
for every $i\in\{1,\ldots,m\}$,
$\widetilde{p}_{2,i,n}$ is $\FF_n$-measurable. 
In turn, by \ref{t:Sv2i},\ref{t:Sv2iia}, and \ref{t:Sv2iib}, we obtain
\begin{equation}
\label{e:hoaquangoc}
\begin{cases}
\|\widetilde{p}_{1,n} - x_n\|^2 = \E[\|\widetilde{p}_{1,n} - x_n\|^2|\FF_n]
\leq 2\E[\|p_{1,n} - x_n\|^2 +\|\widetilde{p}_{1,n} - p_{1,n}\|^2|\FF_n]\to 0\; \text{$\boldsymbol{\mathsf{P}}$-a.s.}\\
(\forall i\in\{1,\ldots,m\})\;
\|\widetilde{p}_{2,i,n}-v_{i,n}\|^2 = \E[\|\widetilde{p}_{2,i,n}-v_{i,n}\|^2 |\FF_n]\\ 
\hspace{5.5cm}\leq 2 \E[\|\widetilde{p}_{2,i,n}-p_{2,i,n}\|^2+\|p_{2,i,n}-v_{i,n}\|^2 |\FF_n] \to 0\;\text{$\boldsymbol{\mathsf{P}}$-a.s.}\\
  \widetilde{p}_{1,n} \weakly \overline{x}\quad \text{$\boldsymbol{\mathsf{P}}$-a.s.}\quad
\text{and}\quad
(\forall i\in\{1,\ldots,m\})\quad \widetilde{p}_{2,i,n}\weakly \overline{v}_i\quad \text{$\boldsymbol{\mathsf{P}}$-a.s.}
\end{cases}
\end{equation}
 \ref{t:Sv2iic}:
We derive from
\eqref{e:sva21} that 
\begin{equation}
\label{e:dualsola}
(\forall n\in\NN)\quad
 \begin{cases}
 \gamma^{-1}_n(x_n-\widetilde{p}_{1,n})
-\sum_{i = 1}^m L_{i}^*v_{i,n} - Cx_n \in -z + A\widetilde{p}_{1,n}\\
(\forall i\in \{1,\ldots,m\})\;
\gamma^{-1}_n(v_{i,n}-\widetilde{p}_{2,i,n}) + L_ix_n  
- D^{-1}_i v_{i,n}  \in r_i + B_{i}^{-1}\widetilde{p}_{2,i,n}.\\
 \end{cases}
\end{equation}
Let $\boldsymbol{\Omega_3}$ be the set of all $\omega\in \boldsymbol{\Omega}$
such that  $(x_n(\omega)-\overline{x}(\omega))_{n\in\NN},
(\widetilde{p}_{1,n}(\omega) -\overline{x}(\omega))_{n\in\NN}$ and $ (\forall i\in\{1,\ldots,m\})\; 
(v_{i,n}(\omega)-\overline{v}_i(\omega))_{n\in\NN}, 
(\widetilde{p}_{2,i,n}(\omega) - \overline{v}_i(\omega))_{n\in\NN}$ are bounded, and 
$(\forall i\in\{1,\ldots,m\})\; \widetilde{p}_{2,i,n}(\omega) - \overline{v}_{i,n}(\omega)\to 0$,  
$\widetilde{p}_{1,n}(\omega) - x_n(\omega)\to 0$.
Set $ \boldsymbol{\Omega_4} = \boldsymbol{\Omega_3}\cap \boldsymbol{\widetilde{\Omega}}$. 
Then  $\boldsymbol{\Omega_4}$ has probability $1$. Now fix $\omega \in \boldsymbol{\Omega_4}$.
Since $A$ is uniformly monotone at $\overline{x}(\omega)$, 
using \eqref{e:dualsol} and \eqref{e:dualsola},
there exists an increasing function
$\phi_A\colon\left[0,+\infty\right[\to\left[0,+\infty\right]$ 
vanishing only at $0$ such that, for every $n\in\NN$,  
\begin{alignat}{2}
 \phi_A(\|\widetilde{p}_{1,n}(\omega) -\overline{x}(\omega)\|) &\leqslant 
\scal{\widetilde{p}_{1,n}(\omega)-\overline{x}(\omega)}{\gamma^{-1}(x_{n}(\omega)-\widetilde{p}_{1,n}(\omega)) 
- \sum_{i = 1}^m(L_{i}^*v_{i,n}(\omega) - L_{i}^*\overline{v}_{i}(\omega)) }\notag\\
&\quad-\chi_n(\omega)\notag\\
&= \scal{\widetilde{p}_{1,n}(\omega)-\overline{x}(\omega)}{\gamma^{-1}_n(x_{n}(\omega)-\widetilde{p}_{1,n}(\omega))}
 -\chi_n(\omega)\notag\\
&\quad- \sum_{i = 1}^m\scal{\widetilde{p}_{1,n}(\omega)
-\overline{x}(\omega)}{L_{i}^*v_{i,n}(\omega) - L_{i}^*\overline{v}_{i}(\omega)}
\label{cz1},
\end{alignat}
where we denote $\big(\forall n\in\NN\big)\; 
\chi_n(\omega) = \scal{\widetilde{p}_{1,n}(\omega) -\bar{x}(\omega)}{Cx_n(\omega) -C\bar{x}(\omega)}$.
Since $(B_{i}^{-1})_{1\leq i\leq m}$ are monotone, for every
$i\in\{1,\ldots,m\}$, we obtain
\begin{alignat}{2}
(\forall n\in\NN)\quad
0 &\leqslant \scal{\widetilde{p}_{2,i,n}(\omega) - \overline{v}_{i}(\omega)}{L_ix_{n}(\omega) 
+\gamma^{-1}_n(v_{i,n}(\omega)-\widetilde{p}_{2,i,n}(\omega)) -L_i\overline{x}(\omega)} -\beta_{i,n}(\omega)\notag\\
&= \scal{\widetilde{p}_{2,i,n}(\omega) - \overline{v}_i(\omega)}{L_i(x_{n}(\omega) -\overline{x}(\omega)) 
+\gamma^{-1}_n(v_{i,n}(\omega)-\widetilde{p}_{2,i,n}(\omega))} 
- \beta_{i,n}(\omega),
\label{cz2}
\end{alignat}
where $\big(\forall n\in\NN\big)\;
 \beta_{i,n}(\omega) = \scal{\widetilde{p}_{2,i,n}(\omega) - \overline{v}_{i}(\omega)}{D_{i}^{-1}v_{i,n}(\omega) - D_{i}^{-1}
\bar{v}_{i}(\omega)}$.
Now, adding~\eqref{cz2} from $i = 1$ to $i = m$ and~\eqref{cz1}, we obtain,
for every $n\in\NN$,
\begin{alignat}{2}\label{cz3}
 \phi_A(\|\widetilde{p}_{1,n}(\omega) -\overline{x}(\omega)\|) 
&\leq\scal{\widetilde{p}_{1,n}(\omega)-\overline{x}(\omega)}{\gamma^{-1}_n(x_{n}(\omega)-\widetilde{p}_{1,n}(\omega))}\notag\\
&\quad+\scal{\widetilde{p}_{1,n}(\omega)-\overline{x}(\omega)}{ \sum_{i=1}^m L_{i}^*(\widetilde{p}_{2,i,n}(\omega)-v_{i,n}(\omega))}
\notag \\
&\quad+\sum_{i = 1}^m \scal{\widetilde{p}_{2,i,n}(\omega) - \overline{v}_i(\omega)}{L_i(x_{n}(\omega) -\widetilde{p}_{1,n}(\omega)) 
+\gamma^{-1}_n(v_{i,n}(\omega)-\widetilde{p}_{2,i,n}(\omega))}\notag\\
&\quad-\chi_n(\omega) - \sum_{i = 1}^{m}\beta_{i,n}(\omega).
\end{alignat}
For every $n\in\NN$ and every $ i\in \{1,\ldots,m\}$,
we expand $\chi_n(\omega)$ and $\beta_{i,n}(\omega)$ as
\begin{equation}
\label{e:expa}
\begin{cases}
\chi_n(\omega) = \scal{x_n(\omega)-\overline{x}(\omega)}{ Cx_n(\omega)- C\overline{x}(\omega)}
 + \scal{\widetilde{p}_{1,n}(\omega) -x_n(\omega)}{ Cx_n(\omega)- C\overline{x}(\omega)},\\
\;\beta_{i,n}(\omega) =
\scal{v_{i,n}(\omega)-\overline{v}_i(\omega)}{ D_{i}^{-1}v_{i,n}(\omega)- D_{i}^{-1}\overline{v}_i(\omega)}\\
 \hspace{6cm}+ \scal{\widetilde{p}_{2,i,n}(\omega) - v_{i,n}(\omega)}{ D_{i}^{-1}v_{i,n}(\omega)- D_{i}^{-1}\overline{v}_i(\omega)}.
\end{cases}
\end{equation}
By  monotonicity of $C$ and $(D_{i}^{-1})_{1\leq i\leq m}$,
\begin{equation}
(\forall n\in\NN)\quad
 \begin{cases}
\scal{x_n(\omega)-\overline{x}(\omega)}{ Cx_n(\omega)- C\overline{x}(\omega)}\geq 0,\\
(\forall i\in \{1,\ldots,m\})\; 
\scal{v_{i,n}(\omega)-\overline{v}_i(\omega)}{ D_{i}^{-1}v_{i,n}(\omega)- D_{i}^{-1}\overline{v}_i(\omega)}\geq 0.
 \end{cases}
\end{equation}
Therefore, for every $n\in\NN$, we derive from \eqref{e:expa} and \eqref{cz3} that
\begin{alignat}{2}
\label{e:concao}
 \phi_A(\|\widetilde{p}_{1,n}(\omega) -\overline{x}(\omega)\|)&\leq
 \phi_A(\|\widetilde{p}_{1,n}(\omega) -\overline{x}(\omega)\|) 
+ \scal{x_n(\omega)-\overline{x}(\omega)}{ Cx_n(\omega)- C\overline{x}(\omega)}\notag\\
&\quad
+ \sum_{i=1}^m \scal{v_{i,n}(\omega)-\overline{v}_i(\omega)}{ D_{i}^{-1}v_{i,n}(\omega)- D_{i}^{-1}\overline{v}_i(\omega)}\notag\\
&\leq\scal{\widetilde{p}_{1,n}(\omega)-\overline{x}(\omega)}{\gamma^{-1}_n(x_{n}(\omega)-\widetilde{p}_{1,n}(\omega))}\notag\\
&\quad+\scal{\widetilde{p}_{1,n}(\omega)-\overline{x}(\omega)}{ \sum_{i=1}^m L_{i}^*(\widetilde{p}_{2,i,n}(\omega)-v_{i,n}(\omega))}
\notag \\
&\quad+\sum_{i = 1}^m \scal{\widetilde{p}_{2,i,n}(\omega) - \overline{v}_i(\omega)}{ 
L_i(x_{n}(\omega) -\widetilde{p}_{1,n}(\omega)) 
+\gamma^{-1}_n(v_{i,n}(\omega)-\widetilde{p}_{2,i,n}(\omega))}\notag\\
&\quad - \scal{\widetilde{p}_{1,n}(\omega) -x_n(\omega)}{ Cx_n(\omega)- C\overline{x}(\omega)} \notag\\
&\quad-\sum_{i=1}^m\scal{\widetilde{p}_{2,i,n}(\omega) - v_{i,n}(\omega)}{ D_{i}^{-1}v_{i,n}(\omega)
- D_{i}^{-1}\overline{v}_i(\omega)}.
\end{alignat}
We set
\begin{equation}
 \zeta(\omega) = \max_{1\leq i\leq m} \sup_{n\in\NN}\{\|x_n(\omega) -\overline{x}(\omega)||,
\|\widetilde{p}_{1,n}(\omega) -\overline{x}(\omega)\|, 
\|v_{i,n}(\omega)-\overline{v}_i(\omega)\|, \|\widetilde{p}_{2,i,n}(\omega) - \overline{v}_i(\omega)\| \}. 
\end{equation}
Then it follows from the definition of $\boldsymbol{\Omega_4}$ that
 $\zeta(\omega) < \infty$, and  from our assumption that
$(\forall n\in\NN)\;\gamma^{-1}_n\leq \varepsilon^{-1}$. 
Therefore, using the Cauchy-Schwarz inequality, and the Lipschitzianity 
of $C$ and $(D^{-1}_i)_{1\leq i\leq m}$,
we derive from \eqref{e:concao} that
\begin{alignat}{2}\label{cz3e}
 \phi_A(\|\widetilde{p}_{1,n}(\omega) -\overline{x}(\omega)\|) 
&\leq \varepsilon^{-1}\zeta\|x_{n}(\omega)-\widetilde{p}_{1,n}(\omega)\|
+\zeta\sum_{i = 1}^m\big(\|L_i\|\; \|x_{n}(\omega) -\widetilde{p}_{1,n}(\omega)\| 
\notag \\
&\quad+ \varepsilon^{-1}\| v_{i,n}-\widetilde{p}_{2,i,n}\|\big)
+\zeta(\omega)\bigg(\sum_{i=1}^m \|L_{i}^*\| \|\widetilde{p}_{2,i,n}(\omega)-v_{i,n}(\omega)\|\notag\\
&\quad+\nu_0\|\widetilde{p}_{1,n}(\omega) -x_n(\omega)\| 
+ \sum_{i=1}^m \nu_i \|\widetilde{p}_{2,i,n}(\omega) - v_{i,n}(\omega)\|\bigg) \notag\\
&\to 0.
\end{alignat}
We deduce from~\eqref{cz3e} and \eqref{e:hoaquangoc} 
that $\phi_A(\|\widetilde{p}_{1,n}(\omega) -\overline{x}(\omega)\|)\to 0$, which implies that 
$\widetilde{p}_{1,n}(\omega)\to \overline{x}(\omega)$. In turn,
$x_n(\omega) \to \overline{x}(\omega)$.
Likewise, if $C$
is uniformly monotone at $\overline{x}(\omega)$, 
there exists an increasing function $\phi_C\colon\left[0,+\infty\right[\to\left[0,+\infty\right]$ that vanishes
only at $0$ such that
\begin{alignat}{2}\label{cz3ef}
 \phi_C(\|x_{n}(\omega) -\overline{x}(\omega)\|) 
&\leq \varepsilon^{-1}\zeta\|x_{n}(\omega)-\widetilde{p}_{1,n}(\omega)\|
+\zeta\sum_{i = 1}^m\big(\|L_i\|\; \|x_{n}(\omega) -\widetilde{p}_{1,n}(\omega)\| 
\quad\notag \\
&\quad+ \varepsilon^{-1}\| v_{i,n}(\omega)-\widetilde{p}_{2,i,n}(\omega)\|\big)
+\zeta\bigg(\sum_{i=1}^m \|L_{i}^*\| \|\widetilde{p}_{2,i,n}(\omega)-v_{i,n}(\omega)\|\notag\\
&\quad+\nu_0\|\widetilde{p}_{1,n}(\omega) -x_n(\omega)\| 
+ \sum_{i=1}^m \nu_i \|\widetilde{p}_{2,i,n}(\omega) - v_{i,n}(\omega)\|\bigg) \notag\\
&\to 0,
\end{alignat}
in turn,  $x_n(\omega) \to \overline{x}(\omega)$.

\ref{t:Sv2iid}: Proceeding
as in the proof of \ref{t:Sv2iic}, we obtain the conclusions.
\end{proof}

We provide an application  to minimization problems in \cite[Section 4]{plc6} which cover
a wide class of convex optimization problems in the literature.
We recall that 
the infimal convolution of the two functions $f$ and $g$ from $\HH$ to $\left]-\infty,+\infty\right]$ is 
\begin{equation}
 f\;\vuo\; g\colon x \mapsto \inf_{y\in\HH}(f(y)+g(x-y)).
\end{equation}
 The proximity operator of $f\in\Gamma_0(\HH)$,
 denoted by $\prox_{f}$, which maps each point $x\in\HH$ to the unique minimizer 
of the function $f +\frac12\|x-\cdot\|^2$. 
\begin{example}
\label{ex:prob1}
Let $m$ be a strictly positive integer.
Let $\HH$ be a real separable Hilbert space,
let $z\in\HH$, let $f\in\Gamma_0(\HH)$,
let $h\colon \HH\to\RR$ be 
convex differentiable function with $\nu_0$-Lipschitz continuous gradient,
for some $\nu_0 \in \left]0,+\infty \right[$.
For every $k\in\{1,\ldots, m\}$, 
let $(\GG_k,\scal{\cdot}{\cdot})$ be a real separable Hilbert space, 
let $r_k \in \GG_k$, 
let $g_{k}\in\Gamma_0(\GG_k)$, let $\ell_k\in\Gamma_0(\GG_k)$
be $1/\nu_k$-strongly convex, for some $\nu_k \in \left]0,+\infty \right[$.
For  every $k\in\{1,\ldots,m\}$,
let $L_{k}\colon\HH \to\GG_k$ 
be a  bounded linear operator. 
The primal problems is to 
\begin{alignat}{2} 
\label{primal2}
&\underset{x\in\HH}{\text{minimize}}\big(f(x) - \scal{x}{z}\big)+
 \sum_{k=1}^m\big(\ell_k\;\vuo\; g_{k})
\big)\bigg( L_{k}x -r_k\bigg) 
+h(x),
\end{alignat}
and the dual problem is to 
\begin{equation}
\label{dual2} 
 \underset{v_1\in\GG_1,\ldots,v_m\in\GG_m}{\text{minimize}} \;
(f^*\;\vuo\; h^*)\bigg(z-\sum_{i=1}^mL_{i}^*v_i\bigg) +
\sum_{i=1}^m \big(g^{*}_i(v_i)+\ell^{*}_i(v_i) +\scal{v_i}{r_i}\big).
\end{equation}
We denote by $\mathcal{P}_1$ and $\mathcal{D}_1$ be the set of solutions 
to \eqref{primal2} and \eqref{dual2}, respectively.
\end{example}

\begin{corollary}
\label{t:Sv2a}
 In Example \ref{ex:prob1},  suppose that
\begin{equation}\label{IVe:ranex6}
z\in\ran\bigg( \partial f + 
\sum_{i=1}^mL^{*}_i
\big((\partial g_i\;\vuo\;\partial\ell_i)(L_i\cdot-r_i)\big)
+\nabla h\bigg).
\end{equation}
Let $(a_{1,n})_{n\in\NN}, (b_{1,n})_{n\in\NN}$, and $(c_{1,n})_{n\in\NN}$
be  sequences of square integrable $\HH$-valued random vectors, and for every $i\in\{1,\ldots,m\}$, 
let $(a_{2,i,n})_{n\in\NN}, (b_{2,i,n})_{n\in\NN}$, and $(c_{2,i,n})_{n\in\NN}$ 
be  sequences of square integrable $\GG_i$-valued random vectors. Furthermore, set 
\begin{equation}
 \beta = \max\{\nu_0,\nu_1,\ldots, \nu_m\} + \sqrt{\sum_{i=1}^m\|L_i\|^2},
\end{equation}
let $x_0$ be a square integrable $\HH$-valued random vector, 
and, for every $i\in\{1,\ldots,m\}$,
let $v_{i,0}$ be a square integrable $\GG_i$-valued random vector, 
let $\varepsilon \in \left]0, 1/(1+\beta)\right[$, let $(\gamma_n)_{n\in\NN}$ be a sequence in 
$[\varepsilon, (1-\varepsilon)/\beta]$. Set 
\begin{equation}
\label{e:sva2a}
(\forall n\in\NN)\quad 
\begin{array}{l}
\left\lfloor
\begin{array}{l}
y_{1,n} = x_n - \gamma_n \big(\nabla h(x_n)+ \sum_{i=1}^{m}L_{i}^*v_{i,n}+ a_{1,n}\big)\\
p_{1,n}=\prox_{\gamma_n f}(y_{1,n} + \gamma_nz) + b_{1,n}\\
\operatorname{for}\  i =1,\ldots, m\\
\left\lfloor
\begin{array}{l}
y_{2,i,n} = v_{i,n} + \gamma_n\big(L_ix_n - \nabla \ell_{i}^{*}(v_{i,n}) + a_{2,i,n}\big)\\
p_{2,i,n} = \prox_{\gamma_ng_{i}^{*}}(y_{2,i,n} -\gamma_nr_i) + b_{2,i,n}\\
q_{2,i,n} = p_{2,i,n} + \gamma_n\big(L_ip_{1,n} -\nabla \ell_{i}^{*}(p_{2,i,n}) + c_{2,i,n}\big)\\
v_{i,n+1} = v_{i,n} - y_{2,i,n} + q_{2,i,n} 
\end{array}
\right.\\[2mm]
q_{1,n} = p_{1,n} -\gamma_n\big(\nabla h(p_{1,n}) + \sum_{i=1}^m L^{*}_ip_{2,i,n} + c_{1,n}\big)\\
x_{n+1} = x_n - y_{1,n} + q_{1,n}.
\end{array}
\right.\\[2mm]
\end{array}
\end{equation}
Suppose that the following conditions hold 
for $\FF_n = \sigma( (x_k,( v_{i,k})_{1\leq i\leq m})_{0\leq k\leq n}$,
\begin{equation}\label{eq:coddda}
\begin{cases}
\sum_{n\in\NN}\sqrt{\E[||| (a_{1,n}, (a_{2,i,n})_{1\leq i\leq m})|||^2 | \FF_n]} < +\infty\\
\sum_{n\in\NN}\sqrt{\E[\| (b_{1,n}, (b_{2,i,n})_{1\leq i\leq m})|||^2 | \FF_n]} < +\infty\\
\sum_{n\in\NN}\sqrt{\E[||| (c_{1,n}, (c_{2,i,n})_{1\leq i\leq m})|||^2 | \FF_n]} < +\infty.
\end{cases}
\end{equation}
Then the following hold.
\begin{enumerate}
\item \label{t:Sv2ia} $\sum_{n\in\NN}\E[\|x_n - p_{1,n}\|^2|\FF_n] < +\infty$ and 
$(\forall i\in \{1,\ldots,m\})\; \sum_{n\in\NN}\E[\|v_{i,n} - p_{2,i,n}\|^2|\FF_n] < +\infty$. 
 \item\label{t:Sv2iia}  
There exist a $\mathcal{P}_1$-valued random vector $\overline{x}$ and a $\mathcal{D}_1$-valued random vector  
$(\overline{v}_1,\ldots, \overline{v}_m)$  such that the following hold.
\begin{enumerate}
 \item \label{t:Sv2iia}
$x_n\weakly \overline{x}$ and 
$\prox_{\gamma_n f}(x_n - \gamma_n (\nabla h(x_n)+ \sum_{i=1}^{m}L_{i}^*v_{i,n}) +\gamma_nz)\weakly \overline{x}$ $\boldsymbol{\mathsf{P}}$-a.s.
\item \label{t:Sv2iib}  $(\forall i\in \{1,\ldots,m\})\; v_{i,n} \weakly \overline{v}_i$ and 
$\prox_{\gamma g^{*}_i}(v_{i,n} + \gamma_n\big(L_ix_n - \nabla \ell^{*}_{i}(v_{i,n})-\gamma_nr_i)  \weakly \overline{v}_i$ $\boldsymbol{\mathsf{P}}$-a.s.
\item \label{t:Sv2iic} Suppose that $f$ or $\nabla h$ is uniformly convex at $\overline{x}(\omega)$
for every $\omega\in \widetilde{\boldsymbol{\Omega}}\subset \boldsymbol{\Omega}$ with 
$\boldsymbol{\mathsf{P}}(\widetilde{\boldsymbol{\Omega}}) =1$, then 
$x_n\to\overline{x}$ and 
$ \prox_{\gamma_n f}(x_n - \gamma_n (\nabla h(x_n)+ \sum_{i=1}^{m}L_{i}^*v_{i,n}) +\gamma_nz)\to\overline{x}$ $\boldsymbol{\mathsf{P}}$-a.s.
\item \label{t:Sv2iid} 
Suppose that $g^{*}_j$ or $\ell_{j}^{*}$ is uniformly convex at $\overline{v}_j(\omega)$
for every $\omega\in \widetilde{\boldsymbol{\Omega}}\subset \boldsymbol{\Omega}$ with 
$\boldsymbol{\mathsf{P}}(\widetilde{\boldsymbol{\Omega}}) =1$, 
for some $j\in\{1,\ldots,m\}$, then 
$v_{j,n}\to\overline{v}_j$ and $\prox_{\gamma_ng^{*}_j}(v_{j,n} 
+ \gamma_n\big(L_jx_n - \nabla\ell^{*}_{j}(v_{j,n})-\gamma_nr_j)  \to\overline{v}_j$ $\boldsymbol{\mathsf{P}}$-a.s.
\end{enumerate}
\end{enumerate}
\end{corollary}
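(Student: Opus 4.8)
The plan is to obtain the result by specializing Corollary~\ref{t:Sv2}: I would replace the operators of Problem~\ref{CCP} by the subdifferentials and gradients of the data of Example~\ref{ex:prob1}. First I would set $A=\partial f$ and $C=\nabla h$; then $A$ is maximally monotone and, by the Baillon--Haddad theorem \cite{livre1}, $C$ is monotone and $\nu_0$-Lipschitzian. For each $i\in\{1,\ldots,m\}$ I would set $B_i=\partial g_i$ and $D_i=\partial\ell_i$; then $B_i$ is maximally monotone, and since $\ell_i$ is $1/\nu_i$-strongly convex, $\ell_i^*$ is finite-valued and Fr\'echet differentiable with $D_i^{-1}=(\partial\ell_i)^{-1}=\partial\ell_i^*=\nabla\ell_i^*$ being $\nu_i$-Lipschitzian \cite{livre1}, i.e.\ $D_i$ is monotone with $\nu_i$-Lipschitzian inverse. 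With these choices $\beta$ equals the constant in Corollary~\ref{t:Sv2}, and condition~\eqref{IVe:ranex6} is precisely~\eqref{e:fbfcond}, so Problem~\ref{CCP} applies. By the analysis of \cite[Section~4]{plc6} (Fermat's rule together with the subdifferential sum rule and the infimal-convolution calculus, which is available because each $\ell_i^*$ is real-valued), $\mathcal{P}\subset\mathcal{P}_1$ and $\mathcal{D}\subset\mathcal{D}_1$; in particular $\mathcal{P}_1\neq\emp$ and $\mathcal{D}_1\neq\emp$, and every $\zer(\AAA+\BB)$-valued random vector produced by the algorithm gives rise to $\mathcal{P}_1$- and $\mathcal{D}_1$-valued random vectors.

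Next I would check that the recursion~\eqref{e:sva2a} is exactly~\eqref{e:sva2} for these operators: since $J_{\gamma_n A}=\prox_{\gamma_n f}$, $J_{\gamma_n B_i^{-1}}=J_{\gamma_n\partial g_i^*}=\prox_{\gamma_n g_i^*}$, and $D_i^{-1}=\nabla\ell_i^*$, the two iterations agree line by line, and the summability hypotheses~\eqref{eq:coddda} are~\eqref{eq:coddd}. Moreover the operator appearing inside $\prox_{\gamma_n f}$ in statement~\ref{t:Sv2iia} is the first block of $J_{\gamma_n\AAA}(\xx_n-\gamma_n\BB\xx_n)$, and likewise for the dual blocks. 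Hence Corollary~\ref{t:Sv2}\ref{t:Sv2i} yields~\ref{t:Sv2ia}, and Corollary~\ref{t:Sv2}\ref{t:Sv2iia}--\ref{t:Sv2iib} yields the existence of a $\mathcal{P}$-valued (hence $\mathcal{P}_1$-valued) random vector $\overline{x}$ and a $\mathcal{D}$-valued (hence $\mathcal{D}_1$-valued) random vector $(\overline{v}_1,\ldots,\overline{v}_m)$ together with the asserted weak convergences.

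For the strong-convergence claims I would translate uniform monotonicity of the operators into uniform convexity of the corresponding functions. For~\ref{t:Sv2iic}: $f$ uniformly convex at $\overline{x}(\omega)$ is equivalent to $A=\partial f$ being uniformly monotone there, and $h$ uniformly convex at $\overline{x}(\omega)$ is equivalent to $C=\nabla h$ being uniformly monotone there, so the hypothesis is exactly that of Corollary~\ref{t:Sv2}\ref{t:Sv2iic}, whence the conclusion. For~\ref{t:Sv2iid}: $g_j^*$ uniformly convex at $\overline{v}_j(\omega)$ is equivalent to $B_j^{-1}=\partial g_j^*$ being uniformly monotone there, and $\ell_j^*$ uniformly convex at $\overline{v}_j(\omega)$ is equivalent to $D_j^{-1}=\nabla\ell_j^*$ being uniformly monotone there, so Corollary~\ref{t:Sv2}\ref{t:Sv2iid} applies and gives the result.

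The argument is essentially bookkeeping; the only points that need a little care are the global single-valuedness and $\nu_i$-Lipschitzness of $\nabla\ell_i^*$ on all of $\GG_i$ (a consequence of the strong convexity of $\ell_i$, hence of the smoothness of $\ell_i^*$) and the primal/dual solution correspondence between $\zer(\AAA+\BB)$ and the pair~\eqref{primal2}--\eqref{dual2}, both of which are furnished by \cite{livre1} and \cite[Section~4]{plc6}. Neither is a genuine obstacle once the data of Example~\ref{ex:prob1} are matched with those of Problem~\ref{CCP}.
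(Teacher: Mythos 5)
Your proposal is correct and takes essentially the same route as the paper, whose proof simply defers to the argument of \cite[Theorem 4.2]{plc6} to reduce the statement to Corollary~\ref{t:Sv2} via the identifications $A=\partial f$, $C=\nabla h$, $B_i=\partial g_i$, $D_i=\partial\ell_i$ together with the standard facts $J_{\gamma_n\partial f}=\prox_{\gamma_n f}$, $(\partial\ell_i)^{-1}=\nabla\ell_i^*$, and the passage from uniform convexity to uniform monotonicity of the (sub)differential. (Your appeal to Baillon--Haddad is superfluous --- monotonicity and $\nu_0$-Lipschitz continuity of $\nabla h$ are immediate from the hypotheses on $h$ --- but this does not affect the argument.)
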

\begin{proof}
Using the same argument as in the proof \cite[Theorem 4.2]{plc6}, the conclusions follows from Corollary \ref{t:Sv2}.
\end{proof}
\begin{remark} Here are some comments.
\begin{enumerate}
\item By using Remark \ref{r:1}, an extension of Corollary \ref{t:Sv2} to the variable metric setting is straightforward.
\item
Almost sure convergence for some primal-dual splitting methods solving composite monotone inclusions 
and composite minimization problems are also presented in \cite{plc14,JCP14}.
\item In the deterministic setting and 
in the case when each $\ell_k$ is the indicator function of $\{0\}$,  and $(\forall k \in\{1,\ldots, m\}) r_k =0$, and $z=0$,
a preconditioned algorithm for solving \eqref{primal2}  can be found in \cite{Pesquet12}.
\end{enumerate}
\end{remark}
\noindent{{\bfseries Acknowledgement.}}
I thank Professor Patrick L. Combettes  for helpful discussions. I thank the referees 
 for their suggestions and correction which helped to improve the first version of the manuscript.
This work is funded by Vietnam National Foundation for Science and Technology
Development (NAFOSTED) under Grant No. 102.01-2014.02.

\end{document}